\newtheorem{theo}{Theorem}
\newtheorem{lem}[theo]{Lemma}
\newtheorem{cor}[theo]{Corollary}
\newtheorem{prop}[theo]{Proposition}
\newtheorem{defn}[theo]{Definition}
\newtheorem{exam}[theo]{Example}
\def\R{\R}
\def\vphi{{\varphi}}
\def\R{{\mathbb R}}
\def\grad{\nabla}
\def\eps{\varepsilon}
\def\qed{\hfill $\vcenter{\hrule height .3mm
\hbox {\vrule width .3mm height 2.1mm \kern 2mm \vrule width .3mm
height 2.1mm} \hrule height .3mm}$ \bigskip}
\def\eps{\varepsilon}
\def\to{\rightarrow}
\newcommand \supp{\operatorname{supp} \,}
\def\pmx{\begin{pmatrix}}
\def\emx{\end{pmatrix}}
\def\Hess{{\nabla^2}}
\def\det{{\rm det}}
\def\supp{{\hbox{supp}}}
\def\Supp{{\hbox{supp}}}
\def\interior{{\hbox{int}}}
\def\R{\mathbb R}
\def\N{\mathbb{N}}
\begin{document}

\title{Functional affine-isoperimetry and an inverse logarithmic Sobolev inequality
\footnote{Keywords: affine isoperimetric inequality, logarithmic Sobolev inequality. 2010 Mathematics Subject Classification: 52A20,  }}

\author{S. Artstein-Avidan\thanks{Partially supported by BSF grant No. 2006079 and by ISF grant No. 865/07}, B. Klartag\thanks{Partially supported by an ISF grant and an IRG grant} ,   C. Sch\"utt
and E. Werner\thanks{Partially supported by  NSF grant  and  BSF grant No. 2006079}}

\date{}

\maketitle
\begin{abstract}
We give a functional version of the affine isoperimetric inequality
for log-concave functions which may be interpreted as an inverse
form of a logarithmic Sobolev inequality inequality for entropy. A
linearization of this inequality gives an inverse inequality to the
Poincar\'e inequality for the Gaussian measure.
\end{abstract}

\section{Introduction}
There is a general approach to extend invariants of convex bodies to
the corresponding invariants of functions \cite{ArtKlarMil,
Fradelizi+Meyer, Klar, Milman}. We investigate here the
affine surface area and the affine isoperimetric inequality and their
corresponding invariants for log-concave functions. The affine
isoperimetric inequality corresponds to an inequality that may be
viewed as an inverse logarithmic Sobolev inequality   for entropy. A
linearization of this inequality yields an inverse inequality to a
Poincar\'e inequality.

Logarithmic Sobolev inequalities provide upper bounds for the
entropy. There is a vast amount of literature on logarithmic Sobolev
inequalities and related topics, e.g.
 \cite{BarKol, BobLed, Car, Gross, LatOle, LYZ2002, Stam}. We quote only
the sharp logarithmic Sobolev inequality for the Lebesgue measure on
$\mathbb R^{n}$  (see, e.g., \cite{BePe})
\begin{eqnarray}\label{logsob1}
\int_{\operatorname{supp}(f)}|f|^{2}  \ln(|f|^2)dx
-\left(\int_{\R^n}|f|^{2}dx\right)\ln\left(
\int_{\R^n}|f|^{2}dx\right) \leq
\frac{n}{2}\ln\left(\frac{2}{\pi e n}\int_{\R^n}\|\grad
f\|^{2}dx \right),
\end{eqnarray}
with equality if and only if $f(x) = (2 \pi)^{-(n/4)} \exp(-\| x - b
\|^2/4)$ for a vector $b \in \R^n$. Here, and throughout the
paper, $\| \cdot \|$ denotes the standard Euclidean norm  and $\langle \cdot, \cdot\rangle$ denotes the standard scalar product on $\R^n$. This
inequality is directly equivalent to the Gross logarithmic Sobolev
inequality \cite{BePe, Gross}
\begin{equation}\label{logsob2}
\int_{\operatorname{supp}(h)}   |h|^2
\ln\left(\frac{|h|}{\|h\|_{L^2(\gamma_{n})}}\right) d \gamma_{n}
\leq \int_{  \R^n} \|\grad h\|^{2}d\gamma_{n},
\end{equation}
where $\gamma_{n}$ is the normalized Gauss measure on $\mathbb
R^{n}$,
$d\gamma_{n}=(2\pi)^{-\frac{n}{2}}e^{-\frac{\|x\|^{2}}{2}}dx$. Equation (\ref{logsob2})
becomes an equality if and only if $h(x)=c e^{\langle a, x \rangle}$ with
$c > 0$ and $a \in \R^n$.

We will now integrate by parts, and rewrite the logarithmic Sobolev inequality as an upper bound
for the entropy in terms of the Laplacian of the function. The main result in this note shall be
a lower bound for  entropy in terms of the Laplacian, the difference between the two bounds being an interchange between integration and
logarithm and  replacement of the arithmetric mean of the eigenvalues of
the Hessian by the geometric mean.

 We shall need some more notation.
Let $(X, \mu)$ be a measure space and let $f: X \rightarrow
\R$ be a measurable function.  Denote the support of $f$ by
$\operatorname{supp}(f)= \{x: f(x) \neq 0\}$. Then the {\em entropy}
of $f$,  $\operatorname{Ent}(f) $,  is defined (whenever it makes
sense) by
\begin{equation}\label{entropy}
\operatorname{Ent}(f)
=  \int_{\operatorname{supp}(f)} |f| \ln(|f|) d \mu - \|f\|_{L^1(X, \mu)} \ln \|f\|_{L^1(X, \mu)} =  \int_{\operatorname{supp}(f)} f \ln\left(\frac{|f|}{\|f\|_{L^1(X, \mu)}}\right) d \mu,
\end{equation}
where $\|f\|_{L^1(X, \mu)} = \|f\|_{L^1( \mu)}= \int_X |f| d \mu$.
In particular, if $\|f\|_{L^1(X, \mu)} = 1$,
$$
\operatorname{Ent}(f) =  \int_{\supp(f)} |f| \ln(|f|) d \mu .
$$

If $f$ is  a positive function,   we get in (\ref{logsob1})
\begin{eqnarray}\label{logsob3}
\operatorname{Ent}(f)
&=&\int_{\R^n}f   \ln(f)dx
-\left(\int_{\R^n}fdx\right)\ln\left(\int_{\R^n}fdx\right)
\nonumber \\
&\leq&  \frac{n}{2}\ln\left(\frac{2}{\pi e n}
\int_{\R^n}\|\grad \sqrt{f}\|^{2}dx \right) =
\frac{n}{2}\ln\left(\frac{1}{2\pi e n}
\int_{\R^n}\frac{\|\grad f\|^{2}}{f}dx \right).
\end{eqnarray}
For a sufficiently smooth function $f$ defined on  $\R^{n}$, we denote the Hessian of
$f$ by $\Hess\left(f\right) = \left(\frac{\partial ^2 f}{\partial x_i \partial x_j} \right)_{i,j=1,\ldots,n}$.
Note that
\begin{equation}\label{Fisher22}
\int_{{\supp (f)}}\frac{\|\grad f\|^{2}}{f}dx =
\int_{\supp (f)} f \ \bigg(\operatorname{tr}
\left(\Hess
 \left(-\ln f\right)\right)\bigg)dx.
\end{equation}
For $f \geq 0$ with $\int f dx=1$, this is the {\em Fisher
information}.
Equation (\ref{Fisher22}) is easily verified using integration by parts.

The logarithmic Sobolev inequality (\ref{logsob3}), together with
(\ref{Fisher22}), becomes
\begin{equation}\label{LogSob55}
\operatorname{Ent}(f)+\ln((2\pi e)^{\frac{n}{2}}) \leq
\frac{n}{2}\ln\left[\frac{1}{n}
 \int_{\supp (f)} f \ \bigg(\operatorname{tr}
\left(\Hess
 \left(-\ln f\right)\right)\bigg)dx\right].
\end{equation}

The main goal in this paper is to prove, for log-concave functions, a
converse of inequality (\ref{LogSob55}). A function $f: \R^n \rightarrow \R$
is called log-concave if it takes
the form $\exp(-\Psi)$ for a convex function $\Psi: \R^n \rightarrow
\R \cup \{ \infty \}$. We shall usually assume also that the function is upper semi-continuous.

This converse log Sobolev inequality
is stated in
 the
following theorem.  It
relates entropy to a new expression, which can be thought of as an
affine invariant  version of Fisher information.

 The inequality is obtained by suitably applying and analysing the affine isoperimetric inequality, which,
 for convex bodies $K$ in $\R^n$,  gives an upper
 bound for the affine surface area.
Affine surface area measures and their related inequalities (see below for the
definition and statements)  have attracted
considerable attention recently e.g. \cite{GaZ, LR2, Lu2, SW2002,
Z3}.

\begin{theo}
Let
$f:\R^{n}\rightarrow [0, \infty)$ be an upper semi-continuous log-concave
function which belongs to $C^2(\Supp(f))  \cap L^1(\R^n, dx)$  and such that
$f \ln f $
and $f  \ln\det\left( \Hess \left(-\ln f\right)\right)\bigg) \in   L^1(\supp (f), dx)$.
Then
\begin{eqnarray*}
 \int_{\supp (f)} f \ \ln \bigg(\det \left( \Hess\left(-\ln f\right)\right)\bigg)  dx
 \leq 2 \bigg[  \operatorname{Ent}(f) +  \|f\|_{L^1( dx)} \ln(2 \pi e)^\frac{n}{2} \bigg].
\end{eqnarray*}
 There is equality for $f(x)=C e^{-\langle A x, x \rangle}$, where
 $C>0$ and $A$ is an $n \times n$ positive-definite matrix of
 determinant one. 
 \label{thm1}
 \end{theo}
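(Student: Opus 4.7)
By the volume-preserving scaling $f \mapsto \lambda^n f(\lambda \cdot)$, both sides of the theorem shift by the same amount, so one reduces to $\|f\|_{L^1} = 1$; then $\operatorname{Ent}(f) = \int f \ln f \, dx$ and the claim becomes
\begin{equation*}
\int f \ln \det \Hess \Psi \, dx \;\le\; 2 \int f \ln f \, dx + n \ln(2\pi e), \qquad \Psi = -\ln f .
\end{equation*}
The plan is to derive this from the classical Blaschke affine isoperimetric inequality $\Omega(K)^{n+1} \le n^{n+1} \omega_n^2 |K|^{n-1}$, applied to the convex super-level sets $K_a = \{\Psi \le a\}$ of $f$, and then to pass from the resulting ``power'' inequality to the ``logarithmic'' form of the theorem via Jensen's inequality.

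At a boundary point of $K_a$ the outer normal is $\nabla \Psi/|\nabla \Psi|$ and the Gauss curvature is $\kappa = \det(\Hess \Psi|_{\nabla \Psi^\perp})/|\nabla \Psi|^{n-1}$; by the Schur complement identity $\det(A|_{v^\perp}) = \det(A)\,\langle A^{-1} v, v\rangle/|v|^2$ (valid for positive-definite $A$), this rewrites as
\begin{equation*}
\kappa \;=\; \frac{\langle (\Hess \Psi)^{-1} \nabla \Psi, \nabla \Psi\rangle \cdot \det \Hess \Psi}{|\nabla \Psi|^{n+1}} .
\end{equation*}
Using the coarea formula together with the layer-cake decomposition $f(x) = \int_0^\infty \mathbf{1}_{K_a}(x) \, e^{-a} \, da$ (after shifting $\Psi$ to be non-negative), the boundary integrals $\Omega(K_a)$ and the volumes $|K_a|$, averaged over $a$ against the weight $e^{-a}\,da$, become integrals over $\R^n$ weighted by $f$. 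Applied level by level, Blaschke combined with Jensen's inequality for the concave map $t \mapsto t^{(n-1)/(n+1)}$ on the volume side (giving $\int_0^\infty |K_a|^{(n-1)/(n+1)} e^{-a}\,da \le \|f\|_1^{(n-1)/(n+1)}$) yields a sharp \emph{functional} affine isoperimetric inequality, schematically
\begin{equation*}
\int f \cdot (\det \Hess \Psi)^{1/(n+1)} \cdot (\text{auxiliary factor}) \, dx \;\le\; C_n \, \|f\|_1^{(n-1)/(n+1)} ,
\end{equation*}
with equality for Gaussians, whose level sets are ellipsoids saturating Blaschke.

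To reach the logarithmic form of the theorem, take the logarithm of this inequality and apply Jensen's inequality for the concave function $\ln$ against the probability measure $f\,dx$: $\int f \ln g \, dx \le \ln \int f g \, dx$. Choosing $g$ as the integrand on the left above and multiplying through by $n+1$ clears the fractional exponent and isolates $\int f \ln \det \Hess \Psi$. The remaining work is to recognise the residual auxiliary term arising from $\int f \ln \langle (\Hess \Psi)^{-1} \nabla \Psi, \nabla \Psi\rangle \, dx$ as producing, via an integration-by-parts argument that exploits the log-concavity of $f$, exactly the factor $2$ in front of $\operatorname{Ent}(f)$ together with the sharp constant $n \ln(2\pi e)$. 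Equality for $f(x) = C e^{-\langle A x, x\rangle}$ is automatic: Blaschke is saturated by the ellipsoidal level sets, Jensen for $\ln$ is tight since $\Hess \Psi = 2A$ is constant, and the auxiliary-term identity is exact on Gaussians.

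The principal technical obstacle is this final bookkeeping: showing that the Schur-complement factor $\langle (\Hess \Psi)^{-1} \nabla \Psi, \nabla \Psi\rangle$ and the coarea weight $|\nabla \Psi|$ combine, after the Jensen step, to produce precisely $2 \operatorname{Ent}(f) + n \ln(2\pi e)$ on the right, without leaving a parasitic $\nabla \Psi$-dependent term that would spoil the clean statement. This constant-matching, relying on the full strength of the log-concavity and $C^2$-regularity hypotheses on $f$, is what distinguishes the genuine \emph{sharp} inverse logarithmic Sobolev inequality from a weaker version with unspecified constants.
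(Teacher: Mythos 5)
Your plan takes a genuinely different route from the paper. The paper lifts $f$ to the body $K_s(f)\subset\R^{n+s}$ from \cite{ArtKlarMil}, applies the affine isoperimetric inequality in $\R^{n+s}$ to obtain the $s$-concave functional inequality of Corollary \ref{cor:s-ineq}, and then differentiates in $s$ at $s=\infty$. The crucial feature of that construction is that the gradient of $f$ enters the curvature of $K_s(f)$ only through the rank-one perturbation $\frac{1}{s}\frac{\nabla f\otimes\nabla f}{f^2}$, which is $O(s^{-2})$ in the determinant expansion \eqref{expand:det} and simply disappears in the limit. You instead stay in $\R^n$, slice by the level sets $K_a=\{-\ln f\le a\}$, and plan to combine Blaschke, coarea, and two applications of Jensen.

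There is a genuine gap in your approach, and it is exactly the obstacle you flag at the end: the Schur-complement factor $\langle(\Hess\Psi)^{-1}\nabla\Psi,\nabla\Psi\rangle$ does not go away under the steps you propose, and the intermediate inequalities are not saturated at the claimed extremizer. Concretely, the Jensen inequality for $t\mapsto t^{(n-1)/(n+1)}$ that you apply to $\int_0^\infty|K_a|^{(n-1)/(n+1)}e^{-a}\,da$ is an equality only when $a\mapsto|K_a|$ is essentially constant, which fails for any Gaussian; and the subsequent Jensen for $\ln$ is an equality only when the integrand $\bigl(\langle(\Hess\Psi)^{-1}\nabla\Psi,\nabla\Psi\rangle\det\Hess\Psi\bigr)^{1/(n+1)}$ is $f\,dx$-a.e.\ constant, which again fails for $f=Ce^{-\langle Ax,x\rangle}$ (the factor is proportional to $\langle A^{-1}x,x\rangle^{1/(n+1)}$, not constant). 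A chain of inequalities that is already strict at the target extremizer cannot yield the sharp constant $n\ln(2\pi e)$. The remaining hope, that an integration-by-parts identity converts $-\int f\ln\langle(\Hess\Psi)^{-1}\nabla\Psi,\nabla\Psi\rangle\,dx$ into exactly $2\operatorname{Ent}(f)+n\ln(2\pi e)-(n+1)\ln(n\omega_n^{2/(n+1)})$, is not substantiated and does not hold even at the Gaussian (for $\Psi=\|x\|^2/2+\tfrac n2\ln 2\pi$ one gets $-\int f\ln\|x\|^2\,dx$, which involves a digamma value and does not match the required constant). In short, the level-set approach introduces a parasitic gradient term you cannot clear with the tools listed, whereas the paper's lift to $\R^{n+s}$ is specifically designed so that this term is suppressed by $1/s^2$ and vanishes upon differentiation at $s=\infty$.
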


It is important to note the affine invariant nature of Theorem
\ref{thm1}. Both the left-hand side and the right-hand side are
invariant under volume-preserving linear transformations. This is
not the case with the logarithmic Sobolev inequality.
 The expression on the right-hand side of
(\ref{LogSob55}) involves the arithmetric mean $\frac{1}{n}\
\left(\operatorname{tr} \left(\Hess
\left(-\ln f\right)\right) \right)$ of the eigenvalues of
$\Hess \left(-\ln f\right)$. The expression on
the left-hand side of Theorem \ref{thm1} can be written as $n \
\ln \bigg(\det\left(
\Hess\left(-\ln f\right)\right)\bigg)^\frac{1}{n}$
and involves the geometric mean of the eigenvalues of
$\Hess  \left(-\ln f\right)$. Thus, we get from
an upper  bound for the entropy to a lower bound for the entropy by
interchanging integration and logarithm and by replacing the
arithmetic mean of the eigenvalues of the Hessian by its geometric
mean.

As  the entropy  for the Gaussian random variable
$g(x)=\frac{1}{(2\pi)^\frac{n}{2}}e^{-\frac{\|x\|^{2}}{2}}$ is
$\operatorname{Ent}(g) = - \ln(2 \pi e)^\frac{n}{2}$, Theorem \ref{thm1}
 immediately implies the following corollary. \vskip 3mm
\begin{cor} Let $f:\R^{n}\rightarrow\mathbb [0, \infty)$ be a
log-concave function such that  $ f  \in C^2(\R^n)$, $
\|f\|_{L^1( dx)} =1$  and such that $f \ln f $ and $ f \ln \left(\det \Hess
 \left(-\ln f\right)\right) \in   L^1(\supp (f), dx)$. Then
$$
  \int_{\supp (f)} f \ln \bigg( \det \left( \Hess \left(-\ln f\right)\right)\bigg) dx
\leq
2\big(
\operatorname{Ent}(f)
-\operatorname{Ent}(g)\big),
$$
with equality for $f(x)=e^{-\pi \langle Ax, x \rangle}$ for a
positive-definite matrix $A$ of determinant one. \label{cor_2}
\end{cor}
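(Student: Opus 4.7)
The plan is to derive this corollary as a direct specialization of Theorem \ref{thm1}, in which the normalization $\|f\|_{L^1(dx)} = 1$ allows the additive term $\|f\|_{L^1}\ln(2\pi e)^{n/2}$ on the right-hand side to be absorbed into $-\operatorname{Ent}(g)$ for the standard Gaussian. There is no substantive analytic obstacle; the only care required is in matching the equality case to the form stated here.

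First I would check that the hypotheses of Theorem \ref{thm1} are satisfied: $f$ is log-concave and $C^2$ on all of $\R^n$, hence in particular $C^2(\supp(f))$ and upper semi-continuous, and the three integrability assumptions of Theorem \ref{thm1} are exactly those assumed in the corollary. Applying Theorem \ref{thm1} and inserting $\|f\|_{L^1(dx)} = 1$ then gives
\begin{equation*}
\int_{\supp(f)} f \ln\bigl(\det \Hess(-\ln f)\bigr)\,dx \;\leq\; 2\bigl[\operatorname{Ent}(f) + \ln(2\pi e)^{n/2}\bigr].
\end{equation*}

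Next I would compute $\operatorname{Ent}(g)$ for the normalized Gaussian $g(x) = (2\pi)^{-n/2} e^{-\|x\|^{2}/2}$. Since $\|g\|_{L^1(dx)} = 1$, definition (\ref{entropy}) reduces to $\operatorname{Ent}(g) = \int g \ln g\,dx$, and using $\int g\,dx = 1$ together with $\int \|x\|^2 g(x)\,dx = n$ one obtains $\operatorname{Ent}(g) = -\tfrac{n}{2}\ln(2\pi) - \tfrac{n}{2} = -\ln(2\pi e)^{n/2}$. Substituting $\ln(2\pi e)^{n/2} = -\operatorname{Ent}(g)$ into the inequality above yields the desired bound.

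For the equality case, Theorem \ref{thm1} asserts equality on the Gaussian family $f(x) = C e^{-\langle \tilde A x, x\rangle}$ with $C > 0$ and $\det \tilde A = 1$; a direct calculation in fact shows equality holds for every centered Gaussian, so the normalization $\|f\|_{L^1} = 1$ can be imposed without leaving the extremal family. To match the presentation of the corollary, I would set $\tilde A = \pi A$ in the extremal form, giving $f(x) = C e^{-\pi \langle Ax, x\rangle}$ with $\det A = \det(\tilde A)/\pi^n$. Requiring $\|f\|_{L^1} = 1$ forces $C \cdot \pi^{n/2}/\sqrt{\det(\pi A)} = 1$, and choosing $\det A = 1$ then gives $C = 1$, which is precisely $f(x) = e^{-\pi\langle Ax, x\rangle}$. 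Conversely, any such $f$ integrates to $1$ and is a centered Gaussian, hence extremal. This completes the identification of the equality case and concludes the proof.
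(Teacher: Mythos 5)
Your proof is correct and follows exactly the paper's route: specialize Theorem \ref{thm1} to $\|f\|_{L^1(dx)}=1$ and substitute the computation $\operatorname{Ent}(g) = -\ln(2\pi e)^{n/2}$ for the standard Gaussian. The paper presents this as an immediate consequence without further comment; your extra remark on the equality case, observing that equality in fact holds for every centered Gaussian (not just those with $\det\tilde A = 1$) so that the $\pi$-rescaled form $e^{-\pi\langle Ax,x\rangle}$ with $\det A=1$ remains extremal, is a sound and worthwhile clarification of a point the paper leaves implicit.
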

The  expression  $\operatorname{Ent}(f)
-\operatorname{Ent}(g)$ is  called the entropy gap.
The linearization of Theorem \ref{thm1}  yields the following
corollary, and alternative proof of which, together with a generalization,
is also given below in Section \ref{lin}.

\begin{cor}\label{CorPoin}
For all functions $\varphi\in C^{2}( \R^{n})\cap
L^{2}(\R^{n}, \gamma_{n})$ with $\|\Hess
\varphi \|_{HS}\in L^{2}(\R^{n},\gamma_{n})$ we have
\begin{equation}\label{CorPoin1_}
\int_{\R^{n}}  \left[ \left\|\grad \varphi\right\|^2_{} - \frac{\|\Hess
\varphi\|_{HS}^2}{2}
\right] d\gamma_n \le \mbox{Var}_{\gamma_n}(\varphi).
\end{equation}
Here,
$\|\ \|_{HS}$ denotes the Hilbert-Schmidt norm and
$\mbox{Var}_{\gamma_n}(\varphi)= \int_{\R^{n}} \varphi^2
d\gamma_n - \left(\int_{\R^{n}} \varphi d\gamma_n\right)^2$
is the variance. There is equality for all polynomials of degree
$2$.
\end{cor}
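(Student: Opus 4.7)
The plan is to derive the inequality as the second-order Taylor expansion of Theorem \ref{thm1} about its Gaussian equality case. Fix $\varphi \in C^{2}(\R^n)$ and, as a preliminary reduction, assume $\varphi$ has a uniformly bounded Hessian; the family
\begin{equation*}
f_\epsilon(x) = g(x)\, e^{\epsilon \varphi(x)}, \qquad g(x) = (2\pi)^{-n/2} e^{-\|x\|^2/2},
\end{equation*}
then satisfies $\Hess(-\ln f_\epsilon) = I - \epsilon\,\Hess \varphi \succ 0$ for all $|\epsilon|$ sufficiently small, so $f_\epsilon$ is upper semi-continuous, log-concave, of class $C^{2}$, in $L^{1}(\R^{n},dx)$, and meets the hypotheses of Theorem \ref{thm1}. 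Both sides of that inequality vanish at $\epsilon = 0$, since $f_0 = g$ is an equality case.

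Next, Taylor-expand both sides of Theorem \ref{thm1} in $\epsilon$. On the left, combine $\ln\det(I - \epsilon\,\Hess \varphi) = -\epsilon\,\Delta \varphi - \tfrac{\epsilon^{2}}{2}\|\Hess \varphi\|_{HS}^{2} + O(\epsilon^{3})$ with $e^{\epsilon \varphi} = 1 + \epsilon \varphi + \tfrac{\epsilon^{2}}{2}\varphi^{2} + O(\epsilon^{3})$ to obtain
\begin{equation*}
\int f_\epsilon \ln\det \Hess(-\ln f_\epsilon)\,dx = -\epsilon \int \Delta \varphi\, d\gamma_n - \epsilon^{2}\left[\int \varphi\,\Delta \varphi\, d\gamma_n + \tfrac{1}{2}\int \|\Hess \varphi\|_{HS}^{2}\, d\gamma_n\right] + O(\epsilon^{3}).
\end{equation*}
On the right, expand $Z_\epsilon := \|f_\epsilon\|_{L^{1}(dx)}$, $Z_\epsilon \ln Z_\epsilon$, and $\int f_\epsilon \ln f_\epsilon\, dx$ in $\epsilon$, and convert the resulting moments of $\|x\|^{2}$ into Laplacian moments via the Gaussian identity
\begin{equation*}
\int \psi\,(n - \|x\|^{2})\,d\gamma_n = -\int \Delta \psi\, d\gamma_n,
\end{equation*}
which follows from $(n - \|x\|^{2})g = -\Delta g$ and two integrations by parts. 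Applying it with $\psi = \varphi$ and $\psi = \varphi^{2}$, together with $\Delta(\varphi^{2}) = 2\varphi\,\Delta \varphi + 2\|\grad \varphi\|^{2}$, and collecting terms yields
\begin{equation*}
2\bigl[\operatorname{Ent}(f_\epsilon) + Z_\epsilon \ln(2\pi e)^{n/2}\bigr] = -\epsilon \int \Delta \varphi\, d\gamma_n + \epsilon^{2}\left[\operatorname{Var}_{\gamma_n}(\varphi) - \int \varphi\,\Delta \varphi\, d\gamma_n - \int \|\grad \varphi\|^{2}\, d\gamma_n\right] + O(\epsilon^{3}).
\end{equation*}

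The $O(\epsilon)$ terms match, reflecting the fact that $\epsilon = 0$ is a critical point of (right-hand side minus left-hand side). Subtracting, dividing by $\epsilon^{2} > 0$, and letting $\epsilon \to 0$ cancels the $\int \varphi\,\Delta \varphi\, d\gamma_n$ contributions and leaves exactly
\begin{equation*}
\int \|\grad \varphi\|^{2}\, d\gamma_n - \tfrac{1}{2}\int \|\Hess \varphi\|_{HS}^{2}\, d\gamma_n \le \operatorname{Var}_{\gamma_n}(\varphi),
\end{equation*}
which is the statement. For a polynomial $\varphi$ of degree at most $2$, $f_\epsilon$ is itself Gaussian of the equality form in Corollary \ref{cor_2} for all small $\epsilon$, so Theorem \ref{thm1} holds with equality to all orders in $\epsilon$; this forces equality of the $\epsilon^{2}$ coefficients, hence equality in the displayed inequality.

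The main obstacle is the rigorous passage from the pointwise Taylor expansions to integrated ones. The $O(\epsilon^{3})$ remainders have polynomial-in-$\Hess \varphi$ majorants, and turning these into integrated $O(\epsilon^{3})$ bounds requires an $\epsilon$-uniform integrable dominator against the Gaussian weight; this is exactly what the preliminary bounded-Hessian assumption provides via dominated convergence. To remove that assumption and recover the general statement, I would approximate a general $\varphi \in C^{2}(\R^{n}) \cap L^{2}(\gamma_n)$ with $\|\Hess \varphi\|_{HS} \in L^{2}(\gamma_n)$ by smooth truncations with bounded Hessian, and pass each of the integrals $\int \varphi^{2}\,d\gamma_n$, $\int \|\grad \varphi\|^{2}\,d\gamma_n$, $\int \|\Hess \varphi\|_{HS}^{2}\,d\gamma_n$ to the limit in the Gaussian weighted Sobolev norm.
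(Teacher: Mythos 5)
Your proposal is correct and follows essentially the same linearization strategy as the paper's proof of Corollary~\ref{CorPoin}: perturb the Gaussian equality case by $\eps\varphi$, Taylor-expand both sides of Theorem~\ref{thm1} to second order in $\eps$, check that the first-order coefficients cancel, and compare the $\eps^2$ coefficients after converting the $\|x\|^2$-weighted moments to Laplacian moments by integration by parts. The only immaterial differences are that you perturb the normalized Gaussian density (so $\|f_0\|_{L^1}=1$) and reduce first to bounded Hessian rather than to bounded support as in the paper, where the latter automatically yields a bounded Hessian and makes the closing cutoff approximation $\varphi\chi_k$ more transparent than ``smooth truncations with bounded Hessian.''
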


The Poincar\'e inequality for the Gauss measure
is ( see \cite{Beck1})
$$
\int_{\R^{n}}|f|^{2}d\gamma_{n}
-\left(\int_{\R^{n}}fd\gamma_{n}\right)^{2}
\leq\int_{\R^{n}}\|\grad f\|^{2}d\gamma_{n}.
$$
Hence, the inequality of Corollary  \ref{CorPoin} gives  a reverse
Poincar\'e inequality.
We shall also give an alternative proof of Corollary \ref{CorPoin}, which generalizes to the following family of inequalities (which we state only in the one dimensional case for simplicity)

\begin{theo}\label{gen-inv-poinc}
For all $m$ and all $\varphi \in C^{m,2}(\R)$ 
with
$\int \varphi d\gamma = 0$, one has
\[\int \sum_{j=0}^{m-1} \frac{\left(\varphi^{(2j+1)}\right)^2}{(2j+1)!} d\gamma
\le \int   \sum_{j=0}^m \frac{\left(\varphi^{(2j)}\right)^2}{(2j)!}
d\gamma \le   \int \sum_{j=0}^m
\frac{\left(\varphi^{(2j+1)}\right)^2}{(2j+1)!} d\gamma .\]
\end{theo}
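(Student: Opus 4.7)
The natural approach is to diagonalize the problem in the Hermite basis, after which everything reduces to an elementary combinatorial identity. Let $\{H_k\}_{k\ge 0}$ denote the probabilists' Hermite polynomials, normalized to be orthonormal in $L^{2}(\gamma)$, and recall the differentiation rule $H_k' = \sqrt{k}\, H_{k-1}$. Expand $\varphi = \sum_k c_k H_k$; the hypothesis $\int \varphi\, d\gamma = 0$ means $c_0 = 0$. Iterating the derivative identity gives $\varphi^{(j)} = \sum_{k\ge j} c_k \sqrt{k!/(k-j)!}\, H_{k-j}$, so Parseval's identity yields
\[
\frac{1}{j!}\int_{\R} \bigl(\varphi^{(j)}\bigr)^2\, d\gamma \;=\; \sum_{k\ge j} c_k^{2} \binom{k}{j}, \qquad j\ge 0,
\]
with the convention $\binom{k}{\ell}=0$ for $\ell>k$.

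Substituting these formulas into the three sums in the statement and swapping the order of summation in $j$ and $k$ (all terms are nonnegative), both inequalities reduce to the pointwise statement: for each integer $k\ge 1$,
\[
\sum_{j=0}^{m-1}\binom{k}{2j+1} \;\le\; \sum_{j=0}^{m}\binom{k}{2j} \;\le\; \sum_{j=0}^{m}\binom{k}{2j+1}.
\]
These follow at once from the classical alternating-sum identity
\[
\sum_{i=0}^{\ell}(-1)^{i}\binom{k}{i} \;=\; (-1)^{\ell}\binom{k-1}{\ell},
\]
itself a one-line induction from Pascal's rule. Specializing to $\ell=2m$ shows that $(\text{middle})-(\text{left}) = \binom{k-1}{2m}\ge 0$, and specializing to $\ell=2m+1$ shows that $(\text{right})-(\text{middle}) = \binom{k-1}{2m+1}\ge 0$.

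The only non-trivial part of the plan is analytic rather than combinatorial: one must justify termwise differentiation of the Hermite series of $\varphi$ up to order $2m+1$ and the application of Parseval at each order. This is precisely the role of the regularity assumption $\varphi\in C^{m,2}(\R)$, which guarantees that the relevant derivatives belong to $L^{2}(\gamma)$; in practice one first proves the inequality for finite Hermite combinations (for which every step above is literal) and then extends to general $\varphi$ by density in the appropriate Gaussian-weighted Sobolev space. I expect this to be the main, though routine, obstacle; the combinatorial heart of the theorem is just the statement that the partial alternating sums $\sum_{i=0}^{\ell}(-1)^{i}\binom{k}{i}$ are nonnegative and alternate in sign with $\ell$.
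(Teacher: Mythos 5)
Your proof is correct and takes essentially the same route as the paper: both expand $\varphi$ in the orthonormal Hermite basis, use $h_k' = \sqrt{k}\,h_{k-1}$ and Parseval to reduce each inequality to a pointwise statement on the $k$-th coefficient, and then observe that the resulting combinatorial fact is the nonnegativity of the partial alternating sums $\sum_{i=0}^{\ell}(-1)^{i}\binom{k}{i}=(-1)^\ell\binom{k-1}{\ell}$. The paper phrases this same inequality as $(x-1)(x-2)\cdots(x-j)\ge 0$ for integers $x\ge 1$, which is just $\binom{k-1}{\ell}\ge 0$ written multiplicatively (with the same equality characterization $x\in\{1,\dots,j\}$).
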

\noindent Here
$\gamma = \gamma_1$ denotes the one-dimensional standard Gaussian distribution, and $C^{m,2}(\R)$ means functions which are $m$ times continuously differentiable whose respective derivatives belong to $L_2$.

Our  results are formulated and
proved for functions that are sufficiently smooth. However, they can
be generalized to functions  that are not necessarily satisfying any
$C^2$-assumptions. We then need to replace the second derivatives by
the generalized second derivatives (compare e.g. \cite{SW2002}).

\medskip

\section{Affine isoperimetry for $s$-concave functions}
\begin{defn} \label{def1}
Let  $s, n \in \N$. We say that $f: {\R}^n\to [0, \infty)$ is  $s$-concave, and denote
$f\in Conc_s({\R}^n)$, if
$f$  is upper semi continuous,
 $\overline{\supp (f)}$
is a  convex body (convex, compact and with non-empty interior) and $f^\frac{1}{s}$ is
concave on $\overline{\supp (f)}$. The class $Conc_s^{(2)}(\R^n)$ shall consist of
such $f\in Conc_s(\R^n)$ which are twice continuously differentiable in the interior of their support.
\end{defn}
Note that for every $f\in Conc_s(\R^n)$ there exists a constant $C>0$ such that
$0\leq f\leq C$. In particular, such an $f$ is integrable.

As  in \cite{ArtKlarMil}, we associate with a function $f \in Conc_s(\R^n)$ a the convex body $K_{s}(f)$
in $\R^{n}\times \R^{s}$ given by
\begin{equation}\label{def.Ksf}
K_s(f):=\big\{(x,y) \in \R^n \times \R^s: x \in
\supp (f), \|y\| \leq
f^\frac{1}{s}(x)\big\}.\end{equation}
A special function in the class $Conc_s(\R^n)$, which will
play the role of the Euclidean ball in convexity, is
 \[g_s(x):=(1-
\|x\|^2)_+^\frac{s}{2} \] where, for $a \in \mathbb{R}$, $a_+ = \max \{a, 0 \}$. It follows
immediately from the definition that $K_s(g_s)=B^{n+s}_2$, the
$(n+s)$-dimensional Euclidean unit ball centred at the origin.
By Fubini's theorem, we have that for all $f \in
Conc_s(\R^n)$
\[ \text{vol}_{n+s}\left(K_s(f)\right) = \text{vol}_s(B_2^s) \int_{\R^n} f dx. \]

An important affine invariant quantity in convex
geometric analysis is  the affine surface area which, for  a convex
body $K\subset \R^n$ with a smooth boundary is defined by
\begin{equation} \label{def:affine}
as_{1}(K)=\int_{\partial K}  \kappa_K(x)^{\frac{1}{n+1}}
 d\mu_{ K}(x).
\end{equation}
 Here, $\kappa (x) =\kappa_{K}(x)$ is the
generalized Gaussian curvature at the   point $x$ in  $\partial K$, the
boundary of $K$, and $\mu=\mu_K$ is the surface area measure on  the
boundary $\partial K$. See e.g. \cite{Lu1991, MW1, SW1990} for extensions of the
definition of affine surface area to an arbitrary convex body in
$\R^n$. For a function $f\in Conc_s(\R^n)$, we define
\begin{equation}\label{def:aff}
as_1^{(s)}(f)=as_1\left(K_s(f) \right).
\end{equation}
Our first goal is to give a precise formula for $as_1^{(s)}(f)$ in
terms of derivatives of the function $f$. This is done in the next
proposition.
There, for $x, y >0$,
$$B(x,y)= \int_0^1 t^{x-1}(1-t)^{y-1} dt$$ is the Beta function.

\begin{prop}\label{prop.s-affine-sa}
Let $s\in \N$ and $f \in Conc_s^{(2)}(\R^n)$. Then
\begin{equation*}
as_1^{(s)}(f)=c_s \int_{\supp (f)}
\left| \det \left( {\Hess f^\frac{1}{s}
} \right) \right|^\frac{1}{n+s+1}
f^{\frac{(s-1)(n+s)}{s(n+s+1)} }\ dx.
\end{equation*}
Here,  $c_s=(s-1) \text{vol}_{n-1}(B^{s-1}_2) B\left(\frac{s-1}{2},\frac{1}{2}\right)$ if $s
\neq 1$ and $c_1=2$.
\end{prop}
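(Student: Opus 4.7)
The plan is to parametrize $\partial K_s(f)\subset \R^{n+s}$ explicitly and compute its Gauss curvature together with its surface measure in terms of $h:=f^{1/s}$, then apply the definition of $as_1$. Outside the measure-zero portion lying over $\partial\supp(f)$, the boundary is parametrized by
\[ \Phi(x,\omega) = (x, h(x)\omega), \qquad x\in \interior(\supp f),\ \omega\in S^{s-1},\]
with outward unit normal $N = (1+\|\grad h\|^2)^{-1/2}(-\grad h(x),\omega)$. Since $f\in Conc_s$, the function $h$ is concave, so $K_s(f)$ is convex and its Gauss curvature is nonnegative.

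A direct computation shows that both fundamental forms split block-diagonally with respect to the $x$-versus-$\omega$ decomposition of the tangent space. The first fundamental form has $x$-block $I_n + \grad h\otimes \grad h$ of determinant $1+\|\grad h\|^2$ and $\omega$-block $h^2 g_{S^{s-1}}$, so the surface element is $d\mu = h^{s-1}\sqrt{1+\|\grad h\|^2}\, dx\, d\sigma(\omega)$. For the second fundamental form, the $x$-block equals $(1+\|\grad h\|^2)^{-1/2}\Hess h$ (since $\partial_{x_i x_j}\Phi = (0,\partial_{ij}h\cdot\omega)$ and $\langle\omega,\omega\rangle=1$), and the $\omega$-block equals $-h(1+\|\grad h\|^2)^{-1/2}g_{S^{s-1}}$, via the identity $\langle\partial_{\theta_k}\partial_{\theta_l}\omega,\omega\rangle = -(g_{S^{s-1}})_{kl}$ obtained by differentiating $\langle\omega,\omega\rangle=1$ twice. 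Cross terms vanish. Taking the absolute value of $\det(\mathrm{II})/\det(\mathrm{I})$, the powers of $h$ and of $1+\|\grad h\|^2$ combine to give
\[ \kappa = \frac{|\det \Hess h|}{(1+\|\grad h\|^2)^{(n+s+1)/2}\, h^{s-1}}. \]

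Raising $\kappa$ to the power $1/(n+s+1)$ precisely cancels the $(1+\|\grad h\|^2)$ factor in $d\mu$, producing
\[ \kappa^{1/(n+s+1)}\, d\mu = |\det \Hess h|^{1/(n+s+1)}\, h^{(s-1)(n+s)/(n+s+1)}\, dx\, d\sigma(\omega). \]
Integrating over $S^{s-1}$ contributes a factor $\text{vol}_{s-1}(S^{s-1})$, and substituting $h = f^{1/s}$ yields the integrand in the proposition. A short $\Gamma$-function calculation identifies $\text{vol}_{s-1}(S^{s-1})$ with the stated $c_s$ for $s \ge 2$, while the case $s=1$ reduces to $S^0$ with total mass $c_1 = 2$. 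The only point requiring care is the spherical block of the second fundamental form; once the identity for $\langle\partial_{\theta_k}\partial_{\theta_l}\omega,\omega\rangle$ is in hand the rest is routine, and any blow-up of $h$ or $\grad h$ near $\partial\supp(f)$ is harmless because the corresponding boundary strata have lower dimension.
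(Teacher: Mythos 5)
Your proposal is correct, and it takes a genuinely different route from the paper. The paper represents $\partial K_s(f)$ locally as a graph of $x_{n+s}=(g^2-\sum_{i=n+1}^{n+s-1}x_i^2)^{1/2}$ over $\R^{n+s-1}$, computes every second partial explicitly, and performs a sequence of row operations to bring the $(n+s-1)\times(n+s-1)$ Hessian into block-triangular form before invoking the graph curvature formula (Lemma \ref{lem.kappa}); the fiber integral over the $(s-1)$-disc is then evaluated as a Beta function. You instead parametrize the boundary as $\Phi(x,\omega)=(x,h(x)\omega)$ on $\interior(\supp f)\times S^{s-1}$ with $h=f^{1/s}$ and compute both fundamental forms directly; the spherical symmetry makes the $x$-vs-$\omega$ splitting automatic (cross terms of both $\mathrm I$ and $\mathrm{II}$ vanish because $\partial_{\theta_k}\omega\perp\omega$), so no row reduction is needed, and the factor $\text{vol}_{s-1}(S^{s-1})$ comes out in one step rather than through a separate Beta integral. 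I verified the identity $\text{vol}_{s-1}(S^{s-1})=(s-1)\,\text{vol}_{s-1}(B_2^{s-1})\,B\!\left(\tfrac{s-1}{2},\tfrac12\right)$ using $\Gamma\!\left(\tfrac{s+1}{2}\right)=\tfrac{s-1}{2}\Gamma\!\left(\tfrac{s-1}{2}\right)$, and the $s=1$ degenerate case ($S^0$ of mass $2$) matches $c_1=2$. (Incidentally, the statement of the proposition has a typo: $\text{vol}_{n-1}$ should read $\text{vol}_{s-1}$, as the paper's own proof confirms.) Your route is shorter and more geometrically transparent, at the cost of requiring the standard second-fundamental-form machinery for parametrized hypersurfaces rather than only the graph formula; both are legitimate, and the resulting curvature formula agrees exactly with Lemma \ref{normal+kappa}(ii).
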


In order to derive the formula for $as_1^{(s)}(f)$, we have  to
compute the affine surface area of the body $K_s(f)$. To
this end, we  compute the curvature of this body, which is
circular in $s$ directions, and is behaving like $f^{1/s}$ in the
other directions. We make use of the following well known lemma.

\begin{lem} \label{lem.kappa}(\cite{Tho}, p. 93, exercise 12.13)
Let $h: \R^n \rightarrow [0,+\infty)$ be twice continuously
differentiable. Let $x=(t,h(t))\in \R^{n}\times \R$ be a point on the graph of $h$.  Then,
with the appropriate orientation, the Gauss curvature $\kappa$ at $x$ is
$$
\kappa(x)=\frac{ \det (\Hess h)}{(1 + \|\grad h \|^2)^{\frac{n+2}{2}}}.
$$
\end{lem}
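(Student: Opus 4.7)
The lemma is the classical Gauss--Kronecker curvature formula for a graph hypersurface, and the plan is simply to verify it from first principles by computing the first and second fundamental forms of the graph at the given point.

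I would parametrize the graph by $\Phi(t)=(t,h(t)):\R^{n}\to\R^{n+1}$, so that $\partial_i\Phi=(e_i,h_{t_i})$ and $\partial_i\partial_j\Phi=(0,h_{t_it_j})$. The first fundamental form is then $g_{ij}=\delta_{ij}+h_{t_i}h_{t_j}$, that is, $g=I_n+\grad h\,(\grad h)^{T}$; as a rank-one perturbation of the identity its only non-unit eigenvalue is $\|\grad h\|^{2}$ (attained in the direction of $\grad h$), giving $\det g=1+\|\grad h\|^{2}$. The upward unit normal to the graph is $N=(-\grad h,1)/\sqrt{1+\|\grad h\|^{2}}$, and the second fundamental form has entries
\[
b_{ij}=\langle N,\partial_i\partial_j\Phi\rangle=\frac{h_{t_it_j}}{\sqrt{1+\|\grad h\|^{2}}},
\]
hence $\det b=\det(\Hess h)/(1+\|\grad h\|^{2})^{n/2}$.

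Since the Gauss--Kronecker curvature is the determinant of the Weingarten (shape) operator $g^{-1}b$, one has $\kappa=\det(b)/\det(g)$; combining the exponents from the two computations above yields exactly
\[
\kappa=\frac{\det(\Hess h)/(1+\|\grad h\|^{2})^{n/2}}{1+\|\grad h\|^{2}}=\frac{\det(\Hess h)}{(1+\|\grad h\|^{2})^{(n+2)/2}},
\]
which is the claimed formula. The ``appropriate orientation'' clause is automatically built in through the choice of upward unit normal. Since this is a routine textbook identity there is no real obstacle; the only points requiring a touch of care are the rank-one determinant computation for $g$ and the fact that the Gauss curvature is $\det(g^{-1}b)$, not $\det(b)$ (the two differing precisely by the factor $\det g$ that produces the ``$+2$'' in the exponent $(n+2)/2$).
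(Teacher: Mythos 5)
Your derivation is correct; the paper itself does not prove Lemma~\ref{lem.kappa}, but simply cites it as a textbook exercise from Thorpe (\cite{Tho}, p.~93, exercise~12.13), so there is no paper proof to compare against. Your argument is exactly the standard one: parametrize the graph, compute the first fundamental form $g = I_n + \grad h\,(\grad h)^{T}$ (whose determinant is $1+\|\grad h\|^2$ by the rank-one determinant identity, the same identity the paper later records as \eqref{eq_1051}), compute the second fundamental form $b_{ij} = h_{t_i t_j}/\sqrt{1+\|\grad h\|^2}$ from the unit normal $N=(-\grad h,1)/\sqrt{1+\|\grad h\|^2}$, and take $\kappa = \det(g^{-1}b) = \det b/\det g$. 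The bookkeeping of exponents is right: $\det b$ contributes $(1+\|\grad h\|^2)^{-n/2}$ and dividing by $\det g$ contributes one more power, giving the total $(n+2)/2$. The only minor remark is that the sign of $\kappa$ depends on the choice of normal (upward vs.\ downward), which is precisely what the lemma's caveat ``with the appropriate orientation'' is for, and which the paper handles explicitly by passing to absolute values in \eqref{curv}; you noted this implicitly by fixing the upward normal.
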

\vskip 3mm

We shall apply Lemma \ref{lem.kappa} to the boundary
of a convex body $K$. We consider only the orientation
that gives nonnegative curvature. Thus, for a point $x \in \partial K$ whose boundary is described locally by the convex function $h$  we can use the
formula
\begin{equation}\label{curv}
\kappa(x)
=\left|
\frac{ \det \left(\Hess h \right)}{(1 + \|\grad h \|^2)^{\frac{n+2}{2}}}\right|.
\end{equation}
We shall denote by
$N_K(x)$ the outer unit normal vector   to $\partial K$ at $x\in \partial K$.

\begin{lem} \label{normal+kappa}
Let $f \in Conc_s^{(2)}(\R^n)$.
 Then for all $x = (x_1,\ldots,x_{n+s}) \in \partial K_s(f)$ with
 $(x_1\ldots,x_n) \in \interior(\Supp(f))$,\\
\noindent (i)    $N_{K_s(f)}(x)=   \frac{\left(
f^\frac{1}{s}\nabla f^\frac{1}{s}, -x_{n+1}, \dots , -x_{n+s}
\right) }{f^\frac{1}{s}\left(1+ \| \nabla
f^\frac{1}{s}\|^2\right)^\frac{1}{2}}$, \\  \noindent (ii)
$\kappa_{K_s(f)}(x)= \left|\frac{ \det \left(\Hess \
f^\frac{1}{s}\right)}{f^\frac{s-1}{s} \left(1 +\|\nabla
f^\frac{1}{s}\|^2\right)^\frac{n+s+1}{2}}\right|.$\\
Here, $f$ is evaluated at $(x_1,\ldots,x_n) \in \R^n$.
\end{lem}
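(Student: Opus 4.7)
The plan is to realise the smooth part of $\partial K_s(f)$ locally as a graph of a $C^2$ function and then invoke Lemma \ref{lem.kappa} together with the implicit description of the boundary. A useful preliminary observation is that $K_s(f)$ is invariant under orthogonal rotations in the last $s$ coordinates, while both sides of the asserted identities depend on $(x_{n+1}, \ldots, x_{n+s})$ only through its norm. Hence it suffices to compute at a boundary point of the form $(u, 0, \ldots, 0, r)$, where $u \in \interior(\Supp(f))$ and $r = f^{1/s}(u) > 0$.

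For part (i), write the boundary near $(u,v)$ implicitly as $F(u,v) := \|v\|^2 - f^{2/s}(u) = 0$. Since the interior of $K_s(f)$ is the set $\{F < 0\}$, the outer unit normal is parallel to $\nabla F$. A direct computation gives $\nabla_u F = -2 f^{1/s}(u)\,\nabla f^{1/s}(u)$ and $\nabla_v F = 2v$, and the normalisation follows from $\|v\| = f^{1/s}(u)$ on the boundary, which makes $\|\nabla F\| = 2 f^{1/s}(u)\sqrt{1 + \|\nabla f^{1/s}(u)\|^2}$. This yields the formula of (i) up to the orientation convention fixed in the statement.

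For part (ii), at the chosen base point $\partial K_s(f)$ is the graph of
\[
h(u, v_1, \ldots, v_{s-1}) \; = \; \sqrt{f^{2/s}(u) - v_1^2 - \cdots - v_{s-1}^2}
\]
over an open subset of $\R^{n+s-1}$, so Lemma \ref{lem.kappa} applies with domain dimension $n+s-1$ in place of $n$; the exponent of the denominator in (\ref{curv}) becomes $(n+s+1)/2$. A short computation at the base point gives $\partial_{u_i} h = \partial_i f^{1/s}$ and $\partial_{v_j} h = 0$, hence $1 + \|\nabla h\|^2 = 1 + \|\nabla f^{1/s}\|^2$. Differentiating once more and evaluating at $v_1 = \cdots = v_{s-1} = 0$, every mixed entry $\partial_{u_i}\partial_{v_j} h$ carries a factor of $v_j$ and therefore vanishes; the $u$-block reduces to $\Hess f^{1/s}$, and the $v$-block to $-I_{s-1}/f^{1/s}$. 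Consequently
\[
\det \Hess h \; = \; \frac{(-1)^{s-1}}{f^{(s-1)/s}} \, \det \Hess f^{1/s},
\]
which inserted in (\ref{curv}) (taking absolute values) yields the curvature formula of (ii).

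The only real subtlety is keeping the ambient dimensions straight so that the correct exponent appears in (\ref{curv}). The rotational reduction in the $y$-coordinates is what makes $\Hess h$ block-diagonal and the determinant computation transparent; without it the off-diagonal entries of $\Hess h$ are nonzero at a generic $v$, although by the $O(s)$-invariance they would ultimately cancel. Once this reduction is in place, the rest is routine differentiation.
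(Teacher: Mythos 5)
Your proof is correct, and for part (ii) it is genuinely cleaner than the paper's. The paper takes a general boundary point, writes $x_{n+s}$ as a graph over the remaining $n+s-1$ coordinates, computes the full (non-sparse) Hessian of $x_{n+s}$ entry by entry, and then evaluates the $(n+s-1)\times(n+s-1)$ determinant by row operations followed by a rank-one determinant identity for the lower-right block; the $s=1$ case has to be handled separately. You instead observe that $K_s(f)$ is $O(s)$-invariant in the last $s$ coordinates and that curvature is a geometric invariant (and the normal is equivariant), so it suffices to compute at a base point $(u,0,\dots,0,r)$. There the graph $h = \sqrt{f^{2/s}(u) - \|v\|^2}$ has $\nabla_v h = 0$ and the mixed and $v$-$v$ second derivatives become trivial, so $\Hess h$ is block-diagonal with blocks $\Hess f^{1/s}$ and $-I_{s-1}/f^{1/s}$, giving the determinant in one line; the $s=1$ case is subsumed. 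This buys a much shorter determinant computation at the cost of one soft symmetry argument. Your check of the $(n+s-1)+2 = n+s+1$ exponent is exactly the dimension bookkeeping that matters. For part (i), your implicit-function computation via $F(u,v) = \|v\|^2 - f^{2/s}(u)$ is equivalent to the paper's graph computation; note that $\nabla F/\|\nabla F\|$ actually comes out as $\bigl(-f^{1/s}\nabla f^{1/s},\, v\bigr)\big/\bigl(f^{1/s}\sqrt{1+\|\nabla f^{1/s}\|^2}\bigr)$, which is the negative of the expression printed in the lemma (the paper's own derivation, starting from $N = (\nabla x_{n+s},-1)/\sqrt{1+\|\nabla x_{n+s}\|^2}$, also points inward on the upper branch), so the sign discrepancy is a convention/typo in the source and not an error on your part; it is immaterial for Proposition \ref{prop.s-affine-sa}, which only uses the curvature.
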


\begin{proof}[Proof of Lemma \ref{normal+kappa}]
If $s=1$, (i) of the lemma follows immediately from elementary calculus and (ii) from Lemma \ref{lem.kappa}.

Therefore, we can assume that $s \geq 2$.
Since,
by equation (\ref{def.Ksf}),  the boundary of $K_s(f)$ is given by
$\{(x, y ) \in \R^n \times \R^s: \|y\| = f^{1/s}(x)\}$, the
boundary of $K_s(f)$ is the union of the graphs of the two mappings
$$(x_1,\dots,x_n,x_{n+1},\dots, x_{n+s-1}) \rightarrow
(x_1,\dots,x_n, x_{n+1},\dots, x_{n+s-1}, \pm x_{n+s}),$$ where, with $x = (x_1, \ldots, x_n)$,
\begin{eqnarray}\label{xn+s}
x_{n+s}&=&\bigg(f^\frac{2}{s}(x)-\sum_{i=n+1}^{n+s-1}x_i^2\bigg)^\frac{1}{2}.
\end{eqnarray}
Because of symmetry, it is enough to consider only the ``positive'' part
of $\partial K_s(f)$, in which the last coordinate is non-negative.
We will show that the outer normal and the curvature exist for $(x,
y)$ with $x \in \supp (f)$ and $\|y\|=f(x)^\frac{1}{s}$ (they
may not exist for $x \in \partial \left(\supp (f)\right)$).

Letting $g=f^{1/s}$ we have
$$
x_{n+s}=\sqrt{g(x_{1},\dots,x_{n})^{2}-\sum_{i=n+1}^{n+s-1}x_{i}^{2}}.
$$
As $f^\frac{1}{s}$ is  everywhere differentiable on its support, we have for $i$ with $1\leq i\leq n$ and, provided $s \geq 2$, for  $j$ with $n+1\leq j\leq n+s-1$,
\begin{eqnarray}\label{diff1}
\frac{\partial x_{n+s}}{\partial x_{i}}
=\frac{g\frac{\partial g}{\partial
x_{i}}}{\sqrt{g^{2}-\sum_{i=n+1}^{n+s-1}x_{i}^{2}}} =\frac{g\frac{\partial
g}{\partial x_{i}}}{x_{n+s}}
\hskip 5mm \text{and} \hskip 5mm
\frac{\partial x_{n+s}}{\partial x_{j}}
=-\frac{x_{j}}{\sqrt{g^{2}-\sum_{i=n+1}^{n+s-1}x_{i}^{2}}}
=-\frac{x_{j}}{x_{n+s}}.
\end{eqnarray}
\vskip 2mm
\noindent
(i) Therefore we get for
almost all $z \in \partial K_s(f)$ with (\ref{xn+s}) and (\ref{diff1})
 \begin{eqnarray*}
\noindent
N_{K_s(f)}(z) = \frac{(\nabla x_{n+s}, -1)}{\left(1+ \| \nabla x_{n+s}\|^2\right)^\frac{1}{2}} = \frac{\left( f^\frac{1}{s}\nabla f^\frac{1}{s}, -x_{n+1}, \dots , -x_{n+s} \right) }{f^\frac{1}{s}\left(1+ \| \nabla f^\frac{1}{s}\|^2\right)^\frac{1}{2}}.
\end{eqnarray*}

\noindent
(ii) We have for all $i$ with $1\leq i\leq n$,
$$
\frac{\partial^{2} x_{n+s}}{\partial x_{i}^{2}}
=\frac{g\frac{\partial^{2}g}{\partial x_{i}^{2}}
+(\frac{\partial g}{\partial x_{i}})^{2}}{x_{n+s}}
-\frac{g^{2}(\frac{\partial g}{\partial x_{i}})^{2}}{x_{n+s}^{3}}
=\frac{g\frac{\partial^{2}g}{\partial x_{i}^{2}}
}{x_{n+s}}
-\frac{(\frac{\partial g}{\partial x_{i}})^{2}
\sum_{j=n+1}^{n+s-1}x_{j}^{2}}{x_{n+s}^{3}}.
$$
For $i\ne j$ with $1\leq i,j\leq n$,
\begin{eqnarray*}
\frac{\partial^{2} x_{n+s}}{\partial x_{i}\partial x_{j}}
&=&\frac{g\frac{\partial^{2} g}{\partial x_{i}\partial
x_{j}}+\frac{\partial g}{\partial x_{i}}\frac{\partial g}{\partial
x_{j}}}{x_{n+s}} -\frac{g^{2}\frac{\partial g}{\partial x_{i}}
\frac{\partial g}{\partial x_{j}}}{x_{n+s}^{3}}
=\frac{g\frac{\partial^{2} g}{\partial x_{i}\partial
x_{j}}}{x_{n+s}} -\frac{\frac{\partial g}{\partial x_{i}}
\frac{\partial g}{\partial x_{j}}
\sum_{j=n+1}^{n+s-1}x_{j}^{2}}{x_{n+s}^{3}}.
\end{eqnarray*}
For $1\leq i\leq n$ and $n+1\leq j\leq n+s-1$
$$
\frac{\partial^{2} x_{n+s}}{\partial x_{i}\partial x_{j}}
=-\frac{x_{j}g\frac{\partial g}{\partial x_{i}}}{x_{n+s}^{3}}.
$$
For $n+1\leq i\leq n+s-1$,
$$
\frac{\partial^{2} x_{n+s}}{\partial x_{i}^{2}}
=-\frac{1}{x_{n+s}}-\frac{x_{i}^{2}}{x_{n+s}^{3}}
=-\frac{x_{n+s}^{2}+x_{i}^{2}}{x_{n+s}^{3}}.
$$
For $i$ and $j$ with $n+1\leq i,j\leq n+s-1$ and $j \neq i$,
$$
\frac{\partial^{2} x_{n+s}}{\partial x_{i}\partial x_{j}}
=-\frac{x_{i}x_{j}}{x_{n+s}^{3}}.
$$
We compute now the determinant of the following $[n+(s-1)]\times [n+(s-1)]$ matrix
$$
\left(\begin{array}{cccccc} \frac{g\frac{\partial^{2}g}{\partial
x_{1}^{2}} }{x_{n+s}} -\frac{(\frac{\partial g}{\partial x_{1}})^{2}
\sum_{j=n+1}^{n+s-1}x_{j}^{2}}{x_{n+s}^{3}} &\dots &
\frac{g\frac{\partial^{2} g}{\partial x_{1}\partial x_{n}}}{x_{n+s}}
-\frac{\frac{\partial g}{\partial x_{1}} \frac{\partial g}{\partial
x_{n}} \sum_{j=n+1}^{n+s-1}x_{j}^{2}}{x_{n+s}^{3}} & \hskip 5mm
-\frac{x_{n+1}g\frac{\partial g}{\partial x_{1}}}{x_{n+s}^{3}}
&\dots &
-\frac{x_{n+s-1}g\frac{\partial g}{\partial x_{1}}}{x_{n+s}^{3}}  \\
\vdots &  &\vdots   &   \vdots  & &\vdots  \\
\frac{g\frac{\partial^{2} g}{\partial x_{n}\partial x_{1}}}{x_{n+s}}
-\frac{\frac{\partial g}{\partial x_{n}} \frac{\partial g}{\partial
x_{1}} \sum_{j=n+1}^{n+s-1}x_{j}^{2}}{x_{n+s}^{3}} &\cdots &
\frac{g\frac{\partial^{2}g}{\partial x_{n}^{2}} }{x_{n+s}}
-\frac{(\frac{\partial g}{\partial x_{n}})^{2}
\sum_{j=n+1}^{n+s-1}x_{j}^{2}}{x_{n+s}^{3}} &\hskip 5mm
-\frac{x_{n+1}g\frac{\partial g}{\partial x_{n}}}{x_{n+s}^{3}}
&\cdots&
-\frac{x_{n+s-1}g\frac{\partial g}{\partial x_{n}}}{x_{n+s}^{3}}  \\
-\frac{x_{n+1}g\frac{\partial g}{\partial x_{1}}}{x_{n+s}^{3}}
&\cdots & -\frac{x_{n+1}g\frac{\partial g}{\partial
x_{n}}}{x_{n+s}^{3}} &-\frac{x_{n+s}^{2}+x_{n+1}^{2}}{x_{n+s}^{3}}
&\cdots
&-\frac{x_{n+s-1}x_{n+1}}{x_{n+s}^{3}} \\
\vdots &   & \vdots&\vdots&  &\vdots      \\
-\frac{x_{n+s-1}g\frac{\partial g}{\partial x_{1}}}{x_{n+s}^{3}} &
\cdots   &
-\frac{x_{n+s-1}g\frac{\partial g}{\partial x_{n}}}{x_{n+s}^{3}} &
-\frac{x_{n+s-1}x_{n+1}}{x_{n+s}^{3}}
&\cdots&-\frac{x_{n+s}^{2}+x_{n+s-1}^{2}}{x_{n+s}^{3}}
\\
\end{array}
\right)
$$
For fixed $i$, $1\leq i\leq n$ we multiply
each of the rows $n+1\leq j\leq n+s-1$ by
$$
\frac{x_{j}}{g}\frac{\partial g}{\partial x_{i}}
$$
and add them up. We obtain the vector
$$
\left(-\frac{\frac{\partial g}{\partial x_{i}}
\frac{\partial g}{\partial x_{1}}
\sum_{j=n+1}^{n+s-1}x_{j}^{2}}{x_{n+s}^{3}},\dots,
-\frac{\frac{\partial g}{\partial x_{i}}
\frac{\partial g}{\partial x_{n}}
\sum_{j=n+1}^{n+s-1}x_{j}^{2}}{x_{n+s}^{3}},
-\frac{x_{n+1}g\frac{\partial g}{\partial x_{i}}}{x_{n+s}^{3}},
\dots,-\frac{x_{n+s-1}g\frac{\partial g}{\partial x_{i}}}{x_{n+s}^{3}}
\right)
$$
and subtract it from the $i$-th row.
The determinant does not change
and we obtain
$$
\left(\begin{array}{cccccc}
\frac{g\frac{\partial^{2}g}{\partial x_{1}^{2}}
}{x_{n+s}}
&\cdots &
\frac{g\frac{\partial^{2} g}{\partial
x_{1}\partial x_{n}}}{x_{n+s}}
& \hskip 5mm
0
&\cdots &
0  \\
\vdots &  &\vdots   &   \vdots  & &\vdots  \\
\frac{g\frac{\partial^{2} g}{\partial
x_{n}\partial x_{1}}}{x_{n+s}}
&\cdots &
\frac{g\frac{\partial^{2}g}{\partial x_{n}^{2}}
}{x_{n+s}}
&\hskip 5mm
0
&\cdots&
0  \\
-\frac{x_{n+1}g\frac{\partial g}{\partial x_{1}}}{x_{n+s}^{3}}
&\cdots & -\frac{x_{n+1}g\frac{\partial g}{\partial
x_{n}}}{x_{n+s}^{3}} &-\frac{x_{n+s}^{2}+x_{n+1}^{2}}{x_{n+s}^{3}}
&\cdots
&-\frac{x_{n+s-1}x_{n+1}}{x_{n+s}^{3}} \\
\vdots &   & \vdots&\vdots&  &\vdots      \\
-\frac{x_{n+s-1}g\frac{\partial g}{\partial x_{1}}}{x_{n+s}^{3}} &
\cdots   &
-\frac{x_{n+s-1}g\frac{\partial g}{\partial x_{n}}}{x_{n+s}^{3}} &
-\frac{x_{n+s-1}x_{n+1}}{x_{n+s}^{3}}
&\cdots&-\frac{x_{n+s}^{2}+x_{n+s-1}^{2}}{x_{n+s}^{3}}
\\
\end{array}
\right).
$$
The determinant of this matrix equals, up to a sign, to
\begin{equation}\label{determinant2}
\frac{g^{n}}{x_{n+s}^{n+3(s-1)}}
\det\left(
\begin{array}{ccc}
\frac{\partial^{2}g}{\partial x_{1}^{2}} &\dots&   \frac{\partial^{2}
g}{\partial x_{1}\partial x_{n}}
\\
\dots &  &\vdots   \\
\frac{\partial^{2} g}{\partial
x_{1}\partial x_{n}}  & \dots&
\frac{\partial^{2}g}{\partial x_{n}^{2}}
\end{array}
\right)
\det\left(\begin{array}{ccc}
x_{n+s}^{2}+x_{n+1}^{2} &\dots&x_{n+s-1}x_{n+1}   \\
\vdots&   &\vdots    \\
x_{n+s-1}x_{n+1}& \dots& x_{n+s}^{2}+x_{n+s-1}^{2}
\end{array}
\right)
\end{equation}
It is left to evaluate the second determinant. To that end we use a
well-known matrix determinant formula: For any dimension $m$ and $y
\in \R^m$,
\begin{equation}
 \det(Id + y \otimes y) = 1 + \| y \|^2 \label{eq_1051} \end{equation} where $y \otimes y$ is
the matrix whose $(y_i y_j)_{i,j=1,\ldots,n}$.
Consequently, for
the second determinant in (\ref{determinant2}) we have
$$
\det\left(\begin{array}{ccc}
x_{n+s}^{2}+x_{n+1}^{2} &\dots&x_{n+s-1}x_{n+1}   \\
\vdots&   &\vdots    \\
x_{n+s-1}x_{n+1}& \dots& x_{n+s}^{2}+x_{n+s-1}^{2}
\end{array}
\right) =\left(\sum_{i=n+1}^{n+s} \frac{x_{i}^{2}}{x_{n+s}^2}
\right)x_{n+s}^{2s} =g^{2}x_{n+s}^{2(s-2)}.
$$
Therefore we get for the expression (\ref{determinant2})
$$
\frac{g^{n+2}}{x_{n+s}^{n+s+1}}
\det\left(
\begin{array}{ccc}
\frac{\partial^{2}g}{\partial x_{1}^{2}} &\dots&   \frac{\partial^{2}
g}{\partial x_{1}\partial x_{n}}
\\
\vdots &  &\vdots   \\
\frac{\partial^{2} g}{\partial
x_{1}\partial x_{n}}  & \dots&
\frac{\partial^{2}g}{\partial x_{n}^{2}}
\end{array}
\right).
$$
Moreover
\begin{eqnarray*}
1+\sum_{i=1}^{n+s-1}\left|\frac{\partial x_{n+s}}{\partial x_{i}}
\right|^{2} &=&1+\sum_{i=1}^{n}\left|\frac{g\frac{\partial
g}{\partial x_{i}}}{x_{n+s}} \right|^{2}
+\sum_{i=n+1}^{n+s-1}\left|\frac{x_{i}}{x_{n+s}}\right|^{2}
=\sum_{i=1}^{n}\left|\frac{g\frac{\partial g}{\partial
x_{i}}}{x_{n+s}} \right|^{2}
+\left|\frac{g}{x_{n+s}}\right|^{2}  \\
&=&\left|\frac{g}{x_{n+s}}\right|^{2}
\left(1+\sum_{i=1}^{n}\left|\frac{\partial g}{\partial x_{i}}
\right|^{2}\right).
\end{eqnarray*}
Therefore, we get by (\ref{curv}) for the curvature
\begin{eqnarray*}
\kappa(z)
&=&\frac{\frac{g^{n+2}}{x_{n+s}^{n+s+1}}
\det\left(
\begin{array}{ccc}
\frac{\partial^{2}g}{\partial x_{1}^{2}} &\dots&   \frac{\partial^{2}
g}{\partial x_{1}\partial x_{n}}
\\
\vdots &  &\vdots   \\
\frac{\partial^{2} g}{\partial
x_{1}\partial x_{n}}  & \cdots&
\frac{\partial^{2}g}{\partial x_{n}^{2}}
\end{array}
\right)}
{\left(\left|\frac{g}{x_{n+2}}\right|^{2}
\left(1+\sum_{i=1}^{n}\left|\frac{\partial g}{\partial x_{i}}
\right|^{2}\right)\right)^{\frac{n+s+1}{2}}}
=\frac{
\det\left(
\begin{array}{ccc}
\frac{\partial^{2}g}{\partial x_{1}^{2}} &\dots&   \frac{\partial^{2}
g}{\partial x_{1}\partial x_{n}}
\\
\vdots &  &\vdots   \\
\frac{\partial^{2} g}{\partial
x_{1}\partial x_{n}}  & \dots&
\frac{\partial^{2}g}{\partial x_{n}^{2}}
\end{array}
\right)}
{g^{s-1}
\left(1+\sum_{i=1}^{n}\left|\frac{\partial g}{\partial x_{i}}
\right|^{2}\right)^{\frac{n+s+1}{2}}}\\
&=& \frac{ \mbox{det} \left(\Hess \ f^\frac{1}{s}\right)}{f^\frac{s-1}{s} \left(1 +\|\nabla f^\frac{1}{s}\|^2\right)^\frac{n+s+1}{2}}.
\end{eqnarray*}
This completes the proof of Lemma \ref{normal+kappa}.
\end{proof}

\begin{proof}[Proof of Proposition
\ref{prop.s-affine-sa}]
Denote by $\tilde{\partial} K_s(f)$ the collection of all
points $(x_1,\ldots,x_{n+s}) \in \partial K_s(f)$ such that
$(x_1,\ldots,x_n) \in \interior(\Supp(f))$. Since there is no contribution to the integral of $as_1\left(K_s(f)\right)$ from $\partial K_s(f) \setminus \overline{\tilde{\partial} K_s(f)}$
(since   the Gauss curvature vanishes on the part with full dimension, if exists)
clearly
$$ as_1^{(s)}(f) = as_1\left(K_s(f)\right)= \int_{\partial K_s(f)} \kappa_{K_s(f)}
^\frac{1}{n+s+1} \ d\mu_{K_s(f)} = \int_{\tilde{\partial} K_s(f)}
\kappa_{K_s(f)} ^\frac{1}{n+s+1} \ d\mu_{K_s(f)}. $$
 By  Lemma
\ref{normal+kappa}
\begin{eqnarray} \label{as}
as_1^{(s)}(f)&=&  \int_{\tilde{\partial} K_s(f)}\frac{\left( \det\left(\Hess (f^\frac{1}{s})\right) \right)^\frac{1}{n+s+1}}{\left(1 + \|\nabla f^\frac{1}{s}\| \right)^\frac{1}{2}}\  f ^{-\frac{s-1}{s(n+s+1)}} \   d\mu_{K_s(f)} \nonumber \\
&=  &  2\ \int_{\R^{n+s-1}} f^\frac{1}{s}\ \left(
\frac{\det\left( \Hess(f^\frac{1}{s})\right)} {f^
\frac{s-1}{s} } \right)^\frac{1}{n+s+1} \frac{ dx_1 \dots dx_{n+s-1}
}{ |x_{n+s}|}
\end{eqnarray}
where $f$ is evaluated, of course, at $(x_1,\ldots,x_n)$. The last
equality follows as the boundary of $K_s(f)$ consists of two, ``positive'' and ``negative'',  parts. For $s=1$,  we get \[ 2
\int_{\R^{n}} \left(\det\left(
 \Hess f \right)
 \right)^\frac{1}{n+2}
dx_1 \dots dx_{n}, \]  hence $c_1=2$.
 For $s >1$,
\begin{eqnarray*}
\int_{\R^{s-1}}\frac{ dx_{n+1} \dots dx_{n+s-1} }{
|x_{n+s}|}&=& \int_{\R^{s-1}}  f^{-\frac{1}{s}}\
\bigg(1-\sum_{i=n+1}^{n+s-1} \left(\frac{x_i}{f^\frac{1}{s}}
\right)^2 \bigg)^{-\frac{1}{2}} \ dx_{n+1} \dots dx_{n+s-1}\\
&=&
\int_{\sum_{i=n+1}^{n+s-1}  y_i^2 \leq 1}
\frac{f^\frac{s-1}{s}}{f^\frac{1}{s}}\ \bigg(1-\sum_{i=n+1}^{n+s-1}
y_i^2 \bigg)^{-\frac{1}{2}} \ dy_{n+1} \dots dy_{n+s-1}\\
&=&
\frac{f^\frac{s-1}{s}}{f^\frac{1}{s}}\  (s-1) \mbox{vol}_{s-1}\left(B_2^{s-1}\right)  \
\int_0^1 \frac{r^{s-2} dr}{(1-r^2)^\frac{1}{2}}\\
&=&
\frac{f^\frac{s-1}{s}}{f^\frac{1}{s}}\  (s-1) \mbox{vol}_{s-1}\left(B_2^{s-1}\right)
\frac{1}{2}B\left(\frac{s-1}{2}, \frac{1}{2}\right).
\end{eqnarray*}
Thus (\ref{as}) becomes
$$
as_1^{(s)}(f)=(s-1)\mbox{vol}_{s-1}\left(B_2^{s-1}\right) B\left(\frac{s-1}{2}, \frac{1}{2}\right) \
\int_{\R^{n}}  f^\frac{s-1}{s}\ \left( \frac{
\mbox{det}\left(\mbox{Hess}(f^\frac{1}{s})\right)} {f^ \frac{s-1}{s} }
\right)^\frac{1}{n+s+1} \ dx,
$$
and the proof of Proposition \ref{prop.s-affine-sa} is  complete.
\end{proof}

With the formula for $as_1^{(s)}(f)$ in hand, we may use the affine
isoperimetric inequality for convex bodies to obtain the following corollary.
\begin{cor} \label{cor:s-ineq}
For all $s \in \mathbb{N}$ and for all $f \in Conc_s^{(2)}(\R^n)$ we have
$$
 \int_{\supp (f)}
\left| \det \left( \Hess f^\frac{1}{s} \right)\right|^\frac{1}{n+s+1}
f^{\frac{s-1}{s}\left( \frac{n+s}{n+s+1}\right)} \ dx \leq d(n,s) \left(\int_{\supp (f)}
f dx\right)^\frac{n+s-1}{n+s+1},
$$
where
$$d(n,s)=\pi^\frac{n}{n+s+1}
\left(\frac{n+s}{s}\right)^\frac{n+s-1}{n+s+1}
\left(\frac{\Gamma(\frac{s}{2})}{\Gamma(\frac{n+s}{2})}\right)^\frac{2}{n+s+1}.
$$
Equality holds if and only if $f=(a + \langle b, x \rangle - \langle
A x, x \rangle)_+^{s/2}$ for $a \in \R, b \in \R^n$ and a
positive-definite matrix $A$.
\end{cor}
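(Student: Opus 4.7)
The overall strategy is to transfer the problem to the convex body $K_s(f) \subset \R^{n+s}$ and invoke the classical affine isoperimetric inequality there, using Proposition \ref{prop.s-affine-sa} to rewrite the affine surface area in terms of $f$ and Fubini to rewrite the volume in terms of $\int f\, dx$. The whole argument is conceptually a one-line application of affine isoperimetry; the real work will be in tracking the constant and identifying the equality cases.

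The plan is as follows. First, recall the classical affine isoperimetric inequality: for a convex body $K \subset \R^N$,
\[
as_1(K) \;\le\; N\, \omega_N^{2/(N+1)}\, \text{vol}_N(K)^{(N-1)/(N+1)},
\]
with equality if and only if $K$ is an ellipsoid (here $\omega_N = \text{vol}_N(B_2^N)$). Apply this to $K = K_s(f)$ in dimension $N = n+s$. By Proposition \ref{prop.s-affine-sa},
\[
as_1(K_s(f)) \;=\; c_s \int_{\supp(f)} \bigl| \det \Hess f^{1/s} \bigr|^{1/(n+s+1)} f^{(s-1)(n+s)/(s(n+s+1))}\, dx,
\]
and by Fubini $\text{vol}_{n+s}(K_s(f)) = \omega_s \int f\, dx$. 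Substituting these two identities into the affine isoperimetric inequality and dividing by $c_s$ yields
\[
\int_{\supp(f)} \bigl| \det \Hess f^{1/s} \bigr|^{1/(n+s+1)} f^{(s-1)(n+s)/(s(n+s+1))} dx \;\le\; \frac{(n+s)\, \omega_{n+s}^{2/(n+s+1)}\, \omega_s^{(n+s-1)/(n+s+1)}}{c_s} \Bigl(\int f\, dx\Bigr)^{(n+s-1)/(n+s+1)}.
\]

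The next step is a purely computational verification that the constant on the right equals $d(n,s)$. Using $\omega_N = \pi^{N/2}/\Gamma(N/2+1)$, a short reduction via $\Gamma((s+1)/2) = \tfrac{s-1}{2}\Gamma((s-1)/2)$ collapses $c_s = (s-1)\, \omega_{s-1}\, B(\tfrac{s-1}{2},\tfrac{1}{2})$ to the cleaner form $c_s = 2\pi^{s/2}/\Gamma(s/2)$ (this also covers the $s=1$ case since $c_1 = 2$). Plugging this into the bound and collecting powers of $\pi$, of $n+s$, and of the Gamma factors should produce exactly
\[
d(n,s) = \pi^{n/(n+s+1)}\Bigl(\tfrac{n+s}{s}\Bigr)^{(n+s-1)/(n+s+1)} \Bigl(\tfrac{\Gamma(s/2)}{\Gamma((n+s)/2)}\Bigr)^{2/(n+s+1)}.
\]
This bookkeeping is where I expect most of the effort to go, but it is routine.

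Finally, for the equality case one traces back through the affine isoperimetric inequality: equality forces $K_s(f)$ to be an ellipsoid in $\R^{n+s}$. Since $K_s(f)$ is invariant under rotations of the last $s$ coordinates (and in particular under $y \mapsto -y$), the ellipsoid must be centered on the hyperplane $y=0$ and its defining quadratic form must be block diagonal, with the $y$-block a multiple of the identity. Hence $K_s(f) = \{(x,y) : \langle A'(x-x_0), x-x_0\rangle + \lambda\|y\|^2 \le 1\}$ for some $x_0 \in \R^n$, positive-definite $A'$ and $\lambda > 0$, which by the definition of $K_s(f)$ translates to $f^{2/s}(x) = \lambda^{-1}(1 - \langle A'(x-x_0), x-x_0\rangle)_+$, i.e.\ $f(x) = (a + \langle b, x\rangle - \langle A x, x\rangle)_+^{s/2}$ with $A$ positive-definite. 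The only delicate point here is to justify that no equality in affine isoperimetry can be attained by a degenerate ellipsoid (i.e.\ one with mixed $x$-$y$ terms), but this is ruled out precisely by the $y$-reflection symmetry of $K_s(f)$.
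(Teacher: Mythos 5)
Your proof takes essentially the same route as the paper: pass to $K_s(f) \subset \R^{n+s}$, apply the classical affine isoperimetric inequality there, and translate back via Proposition~\ref{prop.s-affine-sa} and Fubini's formula for the volume. The constant bookkeeping (including the simplification $c_s = 2\pi^{s/2}/\Gamma(s/2)$) checks out, and your equality-case argument using the $O(s)$-invariance of $K_s(f)$ in the $y$-coordinates is correct and in fact spells out a step the paper leaves implicit.
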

\begin{proof}[Proof of Corollary \ref{cor:s-ineq}] The affine isoperimetric
inequality for convex bodies $K$ in $\R^n$ (see, e.g.,
\cite{Petty}) says that
\begin{equation}\label{aii}
\frac{as_1(K)}{as_1(B^n_2)} \leq
\left(\frac{\mbox{vol}_n\left(K\right)}{\mbox{vol}_n\left(B^n_2\right)}\right)^\frac{n-1}{n+1},
\end{equation}
with equality if and only if $K$ is an ellipsoid. We apply
(\ref{aii}) to $K_s(f) \subset \R^{n+s}$ and get
\begin{eqnarray*}
 \frac{as_1^{(s)}(f)}{as_1^{(s)}(g_s)} &=&
\frac{as_1\left(K_s(f)\right)}{as_1\left(K_s(g_s)\right)} \\
&=& \frac{c_s}{as_1\left(B^{n+s}_2\right)}
\int_{\supp (f)}
\bigg(\mbox{det}\big(\frac{\partial^2 f^\frac{1}{s}}{\partial x_i \partial x_j}\big)_{i,j =1,\ldots,n}\bigg)^\frac{1}{n+s+1} \ f^{\frac{(s-1)(n+s)}{s(n+s+1)}}  dx \\
& \leq & \left(\frac{
\mbox{vol}_s\left(B^s_2\right) \   \int_{\supp (f)}f dx}{\mbox{vol}_{n+s}\left(B^{n+s}_2\right)}\right)^\frac{n+s-1}{n+s+1},
\end{eqnarray*}
with equality if and only if $f(x)=(a + \langle b, x \rangle -
\langle A x, x \rangle)_+^{s/2}$ for $a \in \R, b \in \R^n$ and a
positive-definite matrix $A$. This is rewritten as
$$
\int_{\supp (f)} \left(\det\left(\Hess (
f^\frac{1}{s})\right)\right)^\frac{1}{n+s+1} \
f^{\frac{(s-1)(n+s)}{s(n+s+1)}} dx \leq d(n,s) \
\left(\int_{\supp (f)} f dx \right)^\frac{n+s-1}{n+s+1},
$$ where
\begin{eqnarray*} \nonumber
d(n,s)&=&\frac{(n+s)\ \mbox{vol}_{n+s}\left(B^{n+s}_2\right)}{c_s}
\left(\frac{\mbox{vol}_{s}\left(B^{s}_2\right)}{\mbox{vol}_{n+s}\left(B^{n+s}_2\right)}\right)^\frac{n+s-1}{n+s+1} \\
&=&\pi^\frac{n}{n+s+1}\left(\frac{n+s}{s}\right)^\frac{n+s-1}{n+s+1}\
\left(\frac{\Gamma(\frac{s}{2})}{\Gamma(\frac{n+s}{2})}\right)^\frac{2}{n+s+1}.
\end{eqnarray*}
\end{proof}

It follows immediately from the definition and from
Proposition \ref{prop.s-affine-sa}, that $as_1^{(s) }(f) $ is affine
invariant and that it is a valuation:
\begin{cor}\label{prop}
Let $s \in \mathbb{N}$ and let  $f \in Conc_s(\R^n)  \cap
C^2\left(\supp (f)\right)$. \vskip 2mm \noindent (i) For all linear maps $A: \R^n \rightarrow \R^n$ with
$\det A \neq 0$, and for all $\lambda \in \R$, we have
\[ as_1^{(s)}((\lambda f) \circ A) = \frac{\lambda^\frac{n+s-1}{n+s+1}}{|\det A|}as_1^{(s)}(f).\]
In particular, if
$|\det A|=1$,
$$
as_1^{(s)}(f \circ A) = as_1^{(s)}(f).
$$
\noindent
(ii) $as_1^{(s)}$ is a ``valuation'': If $\max
(f_1,f_2)$ is $s$-concave,  then
\[ as_1^{(s)}(f_1)+ as_1^{(s)}(f_2) =  as_1^{(s)}(\max (f_1, f_2)) +
as_1^{(s)}(\min(f_1, f_2))\]
\end{cor}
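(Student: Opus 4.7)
The plan is to derive both parts by transferring them through the correspondence $f \mapsto K_s(f)$ to the corresponding facts about the affine surface area on convex bodies in $\R^{n+s}$.

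For part (i), the first step is to identify how $K_s(\lambda f \circ A)$ relates to $K_s(f)$. Directly from (\ref{def.Ksf}), the linear map $T : \R^{n+s} \to \R^{n+s}$ given by $T(x,y) = (Ax, \lambda^{-1/s} y)$ carries $K_s(\lambda f \circ A)$ onto $K_s(f)$, with $|\det T| = |\det A|/\lambda$. I would then invoke the standard homogeneity of affine surface area under linear maps, $as_1(UK) = |\det U|^{(d-1)/(d+1)} as_1(K)$ for a convex body $K \subset \R^d$, which one verifies from the scaling of Gauss curvature and boundary measure under dilations combined with rotation invariance. Applied with $d = n+s$ to $U = T^{-1}$, this immediately yields the scaling law for $as_1^{(s)}$; specializing to $\lambda = 1$ and $|\det A| = 1$ gives the affine-invariance statement. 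A purely analytic alternative, useful as a sanity check, is to change variables $u = Ax$ directly in the integral formula of Proposition \ref{prop.s-affine-sa}, using $\Hess((\lambda f \circ A)^{1/s})(x) = \lambda^{1/s} A^{\top} (\Hess f^{1/s})(Ax) A$ together with the Jacobian $|\det A|$ to collect the powers of $\lambda$ and $|\det A|$.

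For part (ii), the key observation is that the body assignment is compatible pointwise with max and min:
\[
K_s(\max(f_1,f_2)) = K_s(f_1) \cup K_s(f_2), \qquad K_s(\min(f_1,f_2)) = K_s(f_1) \cap K_s(f_2),
\]
which is immediate from the definition of $K_s$. The hypothesis that $\max(f_1,f_2)$ is $s$-concave guarantees that $K_s(f_1) \cup K_s(f_2)$ is a convex body, and the intersection is always convex. The desired identity then reduces to the valuation property $as_1(K \cup L) + as_1(K \cap L) = as_1(K) + as_1(L)$ for convex bodies $K, L$ with $K \cup L$ convex, a classical fact for affine surface area established by Sch\"utt and Werner.

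The main obstacle is essentially orientation bookkeeping in part (i), making sure the exponents of $\lambda$ and $|\det A|$ come out with the correct signs when the scaling law is applied to $T^{-1}$ rather than $T$. For part (ii) the only tacit input is the nontrivial valuation property of $as_1$ on convex bodies, which is well established and can be applied as a black box; no additional analytic work is needed.
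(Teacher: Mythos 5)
Your treatment of part (ii) is the paper's own: pass to the bodies, use that $K_s(\max(f_1,f_2))=K_s(f_1)\cup K_s(f_2)$ and $K_s(\min(f_1,f_2))=K_s(f_1)\cap K_s(f_2)$, and quote the Sch\"utt valuation property of $as_1$ on convex bodies. For part (i), however, the paper does not reason on the body at all: it substitutes $y=Ax$ directly in the integral formula of Proposition \ref{prop.s-affine-sa}, which is precisely your ``analytic alternative.'' Your primary route, via $T(x,y)=(Ax,\lambda^{-1/s}y)$ satisfying $T\bigl(K_s((\lambda f)\circ A)\bigr)=K_s(f)$ and the homogeneity law $as_1(UK)=|\det U|^{\frac{d-1}{d+1}}as_1(K)$ in dimension $d=n+s$, is cleaner and harder to get wrong. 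In fact both routes, carried out carefully, give
\begin{equation*}
as_1^{(s)}\bigl((\lambda f)\circ A\bigr)=\left(\frac{\lambda}{|\det A|}\right)^{\frac{n+s-1}{n+s+1}}as_1^{(s)}(f),
\end{equation*}
and not $\lambda^{\frac{n+s-1}{n+s+1}}/|\det A|$ as printed in the corollary. The source of the extra $|\det A|$ power is the Hessian: $\Hess\bigl(((\lambda f)\circ A)^{1/s}\bigr)(x)=\lambda^{1/s}\,A^{\top}\,\Hess\bigl(f^{1/s}\bigr)(Ax)\,A$, whose determinant carries $|\det A|^{2}$ and hence $|\det A|^{2/(n+s+1)}$ into the integrand; combined with the Jacobian $|\det A|^{-1}$ from $y=Ax$ this gives $|\det A|^{2/(n+s+1)-1}=|\det A|^{-\frac{n+s-1}{n+s+1}}$. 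Your geometric argument obtains the same exponent automatically, since $|\det T^{-1}|=\lambda/|\det A|$. So the printed formula is a slip in the paper (the $A^{\top}(\cdot)A$ conjugation was dropped); it has no effect on the $|\det A|=1$ specialization, which is all that is used later, but you should state your correct formula rather than trying to match the paper's.
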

\begin{proof}[Proof of Corollary \ref{prop}]
(i) By Proposition \ref{prop.s-affine-sa},
\begin{eqnarray*}
&&as_1^{(s)}((\lambda f) \circ A )\\
&&=
c_s\  \int_{\supp ( f\circ A)}
\left|\det\left( \Hess \left( (\lambda f) \circ A\right)^\frac{1}{s}
\right) \right|^\frac{1}{n+s+1} \
\lambda^{\frac{(s-1)(n+s)}{s(n+s+1)}} f(Ax)^{\frac{(s-1)(n+s)}{s(n+s+1)} }\ dx \\
&&= c_s\  \frac{ \lambda^{\frac{n+s-1}{n+s+1}}}{|\det A|} \ \int_{\supp (f)}
\left|\det\left( \Hess (f ^\frac{1}{s}
)\right)  \right|^\frac{1}{n+s+1} \
 f^{\frac{(s-1)(n+s)}{s(n+s+1)} }\ dy\\
&& = \frac{ \lambda^{\frac{n+s-1}{n+s+1}}}{|\det A|} as_1^{(s)}( f ).
\end{eqnarray*}
\noindent (ii) By  (\ref{def:aff}) and since the affine
surface area for convex bodies is a valuation \cite{Schuett1},
\begin{eqnarray*}
as_1^{(s)}(f_1) + as_1^{(s)}(f_2) &=& as_1\left(K_s(f_1)\right) +  as_1\left(K_s(f_2)\right) \\
&= & as_1\left(K_s(f_1) \cup K_s(f_2) \right) +  as_1\left(K_s(f_1) \cap K_s(f_2) \right) \\
&= & as_1^{(s)}(\max (f_1, f_2)) + as_1^{(s)}(\min(f_1, f_2)),
\end{eqnarray*}
provided that $K_s(f_1) \cup K_s(f_2)$ is convex.
\end{proof}

\section{$\log$-concave functions}

We would like  to obtain an inequality corresponding to the one of
Corollary \ref{cor:s-ineq} not only for $s$-concave functions but,
more generally, for log-concave functions on $\R^n$, which are the natural functional extension of convex bodies. The union of all classes of
$s$ concave functions over all $s$ is dense within log-concave functions in many natural topologies.

Note that if a function $f$ is
$s_0$-concave for some $s_0$, then it is $s$-concave for all $s\ge
s_0$. Therefore,  by  Corollary \ref{cor:s-ineq}, we get that  for
any $s_0 \in \N$ and any $f \in Conc_{s_0}(\R^n)  \cap
C^2\left(\supp (f )\right)$we have  for all  $s\ge s_0$
\begin{eqnarray*}
 \int_{\supp (f)} f^{\frac{(s-1)(n+s)}{s(n+s+1)}}
\left|\det\left(\Hess (f^\frac{1}{s})\right)\right|^\frac{1}{n+s+1}  dx \leq d(n,s) \left(\int_{\supp (f )}
f dx\right)^\frac{n+s-1}{n+s+1}.
\end{eqnarray*}
Taking the limit as $s\to \infty$ one sees that the limit on both sides is simply
$\int_{\supp (f )} f dx$, so that one does not get an interesting
inequality. However, we may take the derivative at
$s=+\infty$ as in \cite{Klar} (the details are given in the proof
below), and doing so, we obtain the inequality of Theorem \ref{thm1}.

\medskip Before we present the proof of Theorem  \ref{thm1}, we give
an example  in which both sides are computable. The computation is
straightforward and left for the interested reader.
\begin{exam}\label{exam1}
Let $p>1$ and $f:\mathbb R^{n}\rightarrow\mathbb R$ be given by $f(x)=e^{-\sum_{i=1}^n |x_i|^p}$. Then
$$
\int_{\mathbb R^{n}} f \ \ln\bigg(\det\left(
\Hess\left(-\ln f\right)\right)\bigg)  dx = n \ \left(
\frac{2}{p} \ \Gamma\left(\frac{1}{p}\right) \right)^n  \left( \ln
\big(p(p-1)\big) + (p-2) \
\frac{\Gamma^{\prime}(\frac{1}{p})}{\Gamma(\frac{1}{p})}\right)
$$
and
$$
2 \bigg[  \operatorname{Ent}(f) +  \|f\|_{L^1( dx)} \ln(2 \pi e)^\frac{n}{2} \bigg]
=n \  \left( \frac{2}{p} \ \Gamma\left(\frac{1}{p}\right) \right)^n  \left( \ln \bigg(\frac{\pi e}{2 \Gamma(1+\frac{1}{p})^2}\bigg) - \frac{2}{p}\right).
$$
Both expressions are equal when  $p=2$.
\end{exam}

\vskip 3mm
\begin{proof}[Proof of Theorem  \ref{thm1}:]
One is given a function $f$ which is log-concave and $C^2$-smooth in
the interior of its support. In order to apply Corollary \ref{cor:s-ineq}, we
modify $f$ slightly as follows: For $\eps > 0$, set
$$ f_{\eps}(x) = f(x) \exp(-\eps \| x \|^2) \chi_{\{f \geq
\varepsilon\}}(x) \quad \quad \quad (x \in \R^n). $$ By a standard
compactness argument, every log-concave function with compact support is $s_0$-concave for some $s_0$. Hence there exists $s_0 > 0$ such that
$f_\varepsilon$ is $s$-concave for all  $s \geq s_0$ and thus (\ref{s-ineq1})
holds for $f_\varepsilon$ and any $s \geq s_0$. We expand the left hand side and
the right hand side of the inequality in Corollary \ref{cor:s-ineq}
in terms of $\frac{1}{s}$. We have
$$
\frac{\partial^2 f_{\eps}^\frac{1}{s}}{\partial x_i \partial
x_j}=\frac{1}{s} \frac{\partial}{\partial
x_j}\left(f_{\eps}^{\frac{1}{s}-1} \frac{ \partial
f_{\eps}}{\partial x_i}\right) = \frac{f_{\eps}^{\frac{1}{s}-2}}{s}
\left( f_{\eps} \frac{\partial^2 f_{\eps}}{\partial x_i \partial
x_j}  - \frac{\partial f_{\eps}}{
\partial x_j} \frac{\partial f_{\eps}}{ \partial x_i}  + \frac{1}{s} \
\frac{\partial f_{\eps}}{
\partial x_j} \frac{\partial f_{\eps}}{ \partial x_i} \right)
$$
Thus
\begin{eqnarray*}
\Hess(f_{\eps}^{1/s}) &=& \frac{f_{\eps}^\frac{1}{s}}{s}  \left(
\frac{f_{\eps} \Hess(f_{\eps}) - \nabla f_{\eps} \otimes \nabla
f_{\eps} + \frac{1}{s}
\nabla f_{\eps} \otimes \nabla f_{\eps} }{f_{\eps}^2} \right) \\
&=&\frac{f_{\eps}^\frac{1}{s}}{s}  \ \left( \Hess \left(\ln
f_{\eps}\right) + \frac{1}{s} \frac{\nabla f_{\eps} \otimes \nabla
f_{\eps} }{f_{\eps}^2} \right)
\end{eqnarray*}
and hence
$$
\mbox{det}\left(-\frac{\partial^2 f_{\eps}^\frac{1}{s}}{\partial x_i
\partial x_j}\right)_{i,j =1,\ldots,n} = \frac{f_{\eps}^\frac{n}{s}}{s^n}  \
\mbox{det} \left( -\left( \Hess \left(\ln f_{\eps}\right) +
\frac{1}{s} \frac{\nabla f_{\eps} \otimes \nabla f_{\eps}
}{f_{\eps}^2} \right) \right).
$$
Thus the inequality of Corollary \ref{cor:s-ineq} is equivalent to
\begin{eqnarray} \label{s-ineq1}
&&\hskip -15mm \int_{\supp  (f_{\eps})} \left| \mbox{det} \left(
-\left( \Hess \left(\ln  f_{\eps}\right) + \frac{1}{s} \frac{\nabla
f_{\eps} \otimes \nabla f_{\eps} }{f_{\eps}^2} \right)
\right)\right|^\frac{1}{n+s+1} \ f_{\eps}^\frac{n+s-1}{n+s+1} dx
 \nonumber \\
&&\hskip 40mm \leq d(n,s) \ s ^\frac{n}{n+s+1}  \ \left(\int_{\supp
(f_{\eps})} f_{\eps} dx\right)^\frac{n+s-1}{n+s+1}.
\end{eqnarray}
Applying again the formula for the determinant of a rank-one
perturbation of a matrix, we have
\begin{eqnarray}\label{expand:det}
&& \det \left( -\left( \Hess \left(\ln f_{\eps}\right) + \frac{1}{s}
\frac{\nabla f_{\eps} \otimes \nabla
f_{\eps} }{f_{\eps}^2} \right)  \right)  \nonumber \\
&& =\det \left( - \Hess \left(\ln  f_{\eps}\right)  \right) \left[ 1
+s^{-2} f_{\eps}^{-2} \langle \left( \Hess \ln f_{\eps} \right)
^{-1} \nabla f_{\eps}, \nabla f_{\eps} \rangle  \right]
\nonumber \\
&& =\det \left( - \Hess \left(\ln  f_{\eps}\right)  \right) + s^{-2}
\alpha_{\eps}(x),
\end{eqnarray}
where, for a fixed $\eps$, the function $\alpha_{\eps}(x)$ is defined by \eqref{expand:det} and is clearly
bounded on the interior of the support of $f_{\eps}$.  We write, for
the left hand side of (\ref{s-ineq1}),
$$
f_\varepsilon^\frac{n+s-1}{n+s+1}= f_\varepsilon \ \left(f_\varepsilon^{-2}\right)^\frac{1}{n+s+1}
$$
and on the right hand side
$$
\left(\int_{\supp  (f_\varepsilon)}
f_\varepsilon dx\right)^\frac{n+s-1}{n+s+1}= \left(\int_{\supp  (f_\varepsilon)} f_\varepsilon dx\right) \left( \int_{\supp  (f_\varepsilon)} f_\varepsilon dx\right)^\frac{-2}{n+s+1}.
$$
Moreover,
\begin{eqnarray*}
d(n,s) \ s^\frac{n}{n+s+1} &=& \left(s\pi \right)^\frac{n}{n+s+1}\left(\frac{n+s}{s}\right)^\frac{n+s-1}{n+s+1}\
\left(\frac{\Gamma(\frac{s}{2})}{\Gamma(\frac{n+s}{2})}\right)^\frac{2}{n+s+1} \\
&\leq& \left(2 \pi e\right)^\frac{n}{n+s+1} \left(1 +\frac{1}{3s}\right)^\frac{2}{n+s+1},
\end{eqnarray*}
where we have used that
for $x \rightarrow \infty$,
\begin{eqnarray*}\label{gamma}
\Gamma(x)= \sqrt{2\pi} \ x^{x-\frac{1}{2}}\  e^{-x}\ \left[1+
\frac{1}{12x}  + \frac{1}{288x^2}  \pm o(x^{-2})\right],
\end{eqnarray*}
and we make the legitimate assumption that $s$ is sufficiently
large.
 Thus, together with (\ref{expand:det}), it follows from
(\ref{s-ineq1}) that
\begin{eqnarray} \label{s-ineq2}
&&\int_{\supp  (f_\varepsilon)} f_\varepsilon\left|
f_\varepsilon^{-2}\left(\det \left( - \Hess \left(\ln
f_\varepsilon\right) \right)
+s^{-2} \alpha_{\eps}(x) \right)\right|^\frac{1}{n+s+1} dx \nonumber \\
&&\leq \left(\int_{\supp  (f_\varepsilon)} f_\varepsilon dx \right) \left(\left(1+\frac{1}{3s}\right)^2 (2 \pi e)^n \left(\int_{\supp  (f_\varepsilon)} f_\varepsilon dx \right)^{-2} \right)^\frac{1}{n+s+1}.
\end{eqnarray}
We estimate the left hand side of (\ref{s-ineq2}) from below by
\begin{eqnarray}\label{s-ineq3}
&&\int_{\supp  (f_\varepsilon)} f_\varepsilon\left|
f_\varepsilon^{-2}\left(\det \left( -\left( \Hess  \ln
f_\varepsilon\right)  \right)
+ s^{-2} \alpha_{\eps}(x) \right)\right|^\frac{1}{n+s+1} dx \nonumber \\
&&=\int_{\supp  (f_\varepsilon)}f_\varepsilon \exp\bigg(\frac{1}{n+s+1}\ln\bigg| f_\varepsilon^{-2}\bigg(\det
\left( -\left( \Hess
 (\ln  f_\varepsilon\right)    \right)
 + s^{-2} \alpha_\eps(x) \bigg) \bigg|\bigg) dx \nonumber\\
&& \geq \int_{\supp  (f_\varepsilon)} f_\varepsilon
\bigg( 1 + \frac{1}{n+s+1} \ln
\bigg| f_\varepsilon^{-2}\bigg(\det
\left( -\left( \Hess \ln  f_\varepsilon\right)  \right)
 +s^{-2} \alpha_{\eps}(x) \bigg)  \bigg|\bigg) dx.
\nonumber
\end{eqnarray}
We write the  right hand side of (\ref{s-ineq2})
\begin{eqnarray*}
&&\left(\int_{\supp  (f_\varepsilon)} f_\varepsilon dx \right) \left(\left(1+\frac{1}{3s}\right)^2 (2 \pi e)^n \left(\int_{\supp  (f_\varepsilon)} f_\varepsilon dx \right)^{-2} \right)^\frac{1}{n+s+1} \\
&&=
\left(\int_{\supp  (f_\varepsilon)} f_\varepsilon dx \right) \sum_{j=0}^\infty
\frac{1} {j! (n+s+1)^j}
\left(
\ln\left(\frac{\left(1+\frac{1}{3s}\right)^2 (2 \pi e)^n}{\left(\int_{\supp  (f_\varepsilon)} f_\varepsilon dx\right)^2}\right)
\right)^j.
\end{eqnarray*}
Therefore we get the following inequality
\begin{eqnarray}\label{s-ineq4}
 \int_{\supp  (f_\varepsilon)} f_\varepsilon
\bigg( 1 + \frac{1}{n+s+1} \ln
\bigg| f_\varepsilon^{-2}\bigg(\det
\left( -\left( \Hess \ln  f_\varepsilon\right) \right)+ s^{-2} \alpha_{\eps}(x) \bigg)  \bigg|\bigg) dx \nonumber \\
\leq
\left(\int_{\supp  (f_\varepsilon)} f_\varepsilon dx \right) \sum_{j=0}^\infty
\frac{1} {j! (n+s+1)^j}
\left(
\ln\left(\frac{\left(1+\frac{1}{3s}\right)^2 (2 \pi e)^n}{\left(\int_{\supp  (f_\varepsilon)} f_\varepsilon dx\right)^2}\right)
\right)^j.
\end{eqnarray}
We subtract the first order term $\int_{\supp  (f_\varepsilon)} f_\varepsilon dx$
from both side, multiply by $n+s+1$ and take the limit as $s\to \infty$. We get

\begin{eqnarray*}
&& \liminf_{s \to \infty} \int_{\supp  (f_\varepsilon)}
f_\varepsilon \bigg( \ln \bigg| f_\varepsilon^{-2}\bigg(\det
\left( -\left( \Hess \ln  f_\varepsilon\right)
\right)
+ s^{-2} \alpha_{\eps}(x)  \bigg)  \bigg|\bigg) dx \nonumber \\
&& \leq \limsup_{s \to \infty} \int_{\supp  (f_\varepsilon)}
f_\varepsilon \bigg( \ln \bigg| f_\varepsilon^{-2}\bigg(\det
\left( -\left( \Hess \ln  f_\varepsilon\right)
\right)
+ s^{-2} \alpha_{\eps}(x) \bigg)  \bigg|\bigg) dx \nonumber \\
&& \leq
\left(\int_{\supp  (f_\varepsilon)} f_\varepsilon dx \right) \limsup_{s \to \infty} \sum_{j=1}^\infty
\frac{1} {j! (n+s+1)^{j-1}}
\left(
\ln\left(\frac{\left(1+\frac{1}{3s}\right)^2 (2 \pi e)^n}{\left(\int_{\supp  (f_\varepsilon)} f_\varepsilon dx\right)^2}\right)
\right)^j \nonumber\\
&&=
\left(\int_{\supp  (f_\varepsilon)} f_\varepsilon dx \right) \ \ln\left(\frac{ (2 \pi e)^n}{\left(\int_{\supp  (f_\varepsilon)} f_\varepsilon dx\right)^2}\right).
\end{eqnarray*}

In the interior of the support of $f_{\eps}$, the Hessian of
$\nabla^2 (\ln f_{\eps})$ is greater than $\eps Id$, hence we can
apply Fatou's lemma on the left hand side to get
\begin{eqnarray*}
&&  \int_{\supp  (f_\varepsilon)} \liminf_{s \to \infty}
f_\varepsilon \bigg(  \ln \bigg| f_\varepsilon^{-2}\bigg(\det
\left( -\left( \Hess
\left(\ln  f_\varepsilon\right) \right)_{i,j =1,\ldots,n}
\right)
+ s^{-2} \alpha_{\eps}(x) \bigg)  \bigg|\bigg) dx \nonumber \\
&&\leq
\left(\int_{\supp  (f_\varepsilon)} f_\varepsilon dx \right) \ \ln\left(\frac{ (2 \pi e)^n}{\left(\int_{\supp  (f_\varepsilon)} f_\varepsilon dx\right)^2}\right),
\end{eqnarray*}
which simplifies to
\begin{eqnarray}\label{s-ineq5}
&&  \int_{\supp  (f_\varepsilon)}  f_\varepsilon
\bigg(  \ln  \bigg(\det \left( -\left(
\Hess \ln
f_\varepsilon\right)   \right)
 \bigg)   \bigg) dx \nonumber \\
&& \leq \left(\int_{\supp  (f_\varepsilon)} f_\varepsilon dx \right) \ \ln\left(\frac{ (2 \pi e)^n}{\left(\int_{\supp  (f_\varepsilon)} f_\varepsilon dx\right)^2}\right)
+ 2 \int_{\supp  (f_\varepsilon)}  f_\varepsilon
  \ln   f_\varepsilon
  dx
.
\end{eqnarray}
Now we pass to the limit $\varepsilon \to 0$ on both sides of
(\ref{s-ineq5}). We deal with each of the three terms separately.
For the first term, since $-\ln f_{\eps} = -\ln f + \eps \|\cdot
\|^2/2$, we have
\[ \int_{\{f\ge \varepsilon\} }  f_\eps
\bigg(  \ln  \bigg(\det \left(-\Hess (\ln f)  + \eps Id \right)
\bigg) dx \ge \int_{\{f\ge \varepsilon\} }  f_\eps \bigg(  \ln
\bigg(\det \left(-\Hess \ln f\right)  \bigg) dx. \] Since the
integral $  f
  \ln   (\det
 (\Hess \ln f )  )$ is assumed to belong to $L_1$
 and   $f_\varepsilon$ increases monotonously to $f$ as $\varepsilon \to 0$,
  the integrand is bounded by $ f \left|\ln   (\det
 (\Hess \ln f )  )\right|$ and by the dominated convergence theorem
\[ \lim_{\eps\to 0} \int_{\{f\ge \varepsilon\} }  f_\eps
\bigg(  \ln  \bigg(\det \left(-\Hess \ln f\right)  \bigg) dx =
\int_{\supp f}  f \bigg(  \ln  \bigg(\det \left(-\Hess \ln f\right)
\bigg) dx.\] Similarly, monotone convergence theorem ensures that
$$
\lim_{\varepsilon \to 0} \left(\int_{\supp  (f_\varepsilon)} f_\varepsilon dx \right) \ \ln\left(\frac{ (2 \pi e)^n}{\left(\int_{\supp  (f_\varepsilon)} f_\varepsilon dx\right)^2}\right)= \left(\int_{\supp  (f)} fdx \right) \ \ln\left(\frac{ (2 \pi e)^n}{\left(\int_{\supp  (f)} f dx\right)^2}\right).
$$
We are left with showing that for the entropy function
\[ \lim_{\eps\to 0}\int  f_\varepsilon
  \ln   f_\varepsilon
    = \int f \ln f. \]
This is straightforward from the definition of $f_\eps$ and the assumptions on $f$, as
\[   f_\varepsilon
  \ln   f_\varepsilon  =   \left( e^{-\eps \|x\|^2/2}f\ln f  +
  \eps f e^{-\eps \|x\|^2/2}  \|x\|^2/2\right) \chi_{\{f\ge \eps\}}.\]
For the first term, apply again the dominated convergence theorem,
 and the second term disappears since the second moment of $f_{\eps}$ is bounded
 uniformly by the second moment of $f$. We end up with
\begin{eqnarray}\label{log-ineq5}
&&  \int_{\supp  (f)}  f
\bigg(  \ln  \bigg(\det \left( -\left(
\Hess \ln
f \right)   \right)
 \bigg)   \bigg) dx \nonumber \\
&& \leq \left(\int_{\supp  (f)} f \right) \ \ln\left(\frac{ (2 \pi e)^n}{\left(\int_{\supp  (f)} f dx\right)^2}\right)
+ 2 \int_{\supp  (f)}  f
  \ln   f.
\end{eqnarray}
This completes the proof of the main inequality.
The equality case is easily verified, and in particular follows from the affine invariance together with the computation in Example \ref{exam1}.
\end{proof}

\section{Linearization}\label{lin}

In this section we prove Corollary \ref{CorPoin}, be means of linearization
of our main inequality around its equality case. for convenience, we rewrite the
inequality of  Theorem \ref{thm1} in terms of a convex function
$\psi: \mathbb R^{n} \to \mathbb R$ such that
 $f=e^{-\psi}$.
We get
\begin{eqnarray}\label{mainineq55}
&& \hskip -10mm \int_{\mathbb R^{n}} e^{-\psi} \ln(\det(\Hess(\psi))) dx  \leq \nonumber\\
&& \hskip -10mm 2 \left\{ -\int_{\mathbb R^{n}} e^{-\psi} {\psi} dx-
\left(\int_{\mathbb R^{n}} e^{-\psi}dx\right) \ln\left( \int_{\mathbb R^{n}} e^{-\psi}dx \right)
+ \left( \int_{\mathbb R^{n}} e^{-\psi}dx \right)
 \ln(2 \pi e)^\frac{n}{2} \right\}.
\end{eqnarray}
Note that the support of $f$ is $\mathbb R^{n}$.
We then linearize
around the equality case $\psi(x) =   \|x\|^2/2$.

\begin{proof}[Proof of  Corollary \ref{CorPoin}] We first prove the corollary for functions with bounded
support.
Thus, let $\varphi$ be a twice continuously differentiable
function with bounded support and let $\psi(x)
= \|x\|^2/2 + \eps \varphi(x)$. Note that for sufficiently small
$\eps$ the function $\psi$ is convex. Therefore we can plug $\psi$ into inequality
(\ref{mainineq55}) and develop in
powers of $\eps$. We evaluate first the left hand
expression of (\ref{mainineq55}).
Since
$
\Hess(\psi)=I+\eps \varphi
$,
we obtain for the left hand side
$$
 \int_{{\mathbb R}^n} e^{-\|x\|^2/2 - \eps \varphi} \ln (\det (I
+ \eps \Hess \varphi)) dx.
$$
By Taylor's theorem this equals
$$
 \int_{{\mathbb R}^n} e^{-\|x\|^2/2} \left(1 - \eps \varphi +
\frac{\eps^2}{2} \varphi^2 \right) \cdot
 \ln (\det (I
+ \eps \Hess \varphi)) dx
+ O(\eps^3).
$$
For a matrix $A=(a_{i,j})_{i,j=1,\ldots,n}$, let $D(A) = \sum_{i
=1}^n\sum_{j\neq i}^n [a_{i,i}a_{j,j} - a_{i,j}^2]$. Note that each
$2\times 2$ minor is counted twice. Then
$$
\det(I+\eps \Hess \varphi)
=1+\eps \triangle \varphi
+\frac{\eps^{2}}{2}D(\Hess \varphi)+O(\eps^{3})
$$
where
$\triangle \varphi = \operatorname{tr}( \Hess \varphi)$
is the Laplacian of $\varphi$. Therefore
the left hand side equals
\begin{eqnarray*}
&& \int_{{\mathbb R}^n} e^{-\|x\|^2/2} \left(1 - \eps \varphi +
\frac{\eps^2}{2} \varphi^2 \right) \cdot
\left( \eps \triangle \varphi + \frac{\eps^2}{2}D(\Hess \varphi) -
\frac{\eps^2}{2} (\triangle \varphi)^2
 \right)dx
+ O(\eps^3) \\
&&= \eps \int_{{\mathbb R}^n} e^{-\|x\|^2/2} \triangle \varphi  dx+
\eps^2 \int_{{\mathbb R}^n} e^{-\|x\|^2/2} \left[ - \varphi \triangle
\varphi  + \frac{D(\Hess \varphi) - (\triangle \varphi)^2}{2}
\right]dx + O(\eps^3)
\\
&&= \eps \int_{\mathbb R^n} \left(\|x\|^2 - n\right) e^{-\|x\|^2/2}
\varphi + \eps^2 \int_{{\mathbb R}^n} e^{-\|x\|^2/2} \left[ - \varphi
\triangle \varphi - \frac{\|\Hess \varphi\|_2^2}{2} \right] +
O(\eps^3).
\end{eqnarray*}
The last equation follows by twice integration by parts.
\par
Now we evaluate the right hand side expression. First consider
\begin{eqnarray*}
\int_{\mathbb R^{n}}
e^{-\psi}dx
&=& \int_{\mathbb R^n}  e^{-\|x\|^2/2}dx - \eps \int_{\mathbb R^n}
e^{-\|x\|^2/2} \varphi dx+ \eps^2 \int_{\mathbb R^n}  e^{-\|x\|^2/2} \frac{\varphi^2}{2}dx
+O(\eps^3).
\end{eqnarray*}
Next,
\begin{eqnarray*}
-\int_{\mathbb R^{n}} e^{-\psi} \psi dx
&=& - \int_{\mathbb R^n}  e^{-\|x\|^2/2} \left( 1 - \eps \varphi + \eps^2
\frac{\varphi^2}{2} \right) \cdot \left( \frac{\|x\|^2}{2} + \eps
\varphi dx
\right) + O(\eps^3)  \\
&=&
-\int_{\mathbb R^n}  e^{-\|x\|^2/2}\frac{\|x\|^2}{2} +
\eps\left(\int_{{\mathbb R}^n} \varphi e^{-\|x\|^2/2}
\left(\frac{\|x\|^2}{2}-1\right)dx\right)\\
&&+\eps^2 \left( \int_{\mathbb R^n} \varphi^2 e^{-\|x\|^2/2}
\left(1-\frac{\|x\|^2}{4}\right)dx\right) + O(\eps^3)
\end{eqnarray*}
To treat $\left(\int_{\mathbb R^{n}} e^{-\psi}dx\right) \ln\left( \int_{\mathbb R^{n}} e^{-\psi}dx\right)$,
we consider the function $g(y) = y\ln y$, which we will  apply
to $\int e^{-\|x\|^2/2-\eps \varphi}$.
We obtain
\begin{eqnarray*}
&&\int_{\mathbb R^n} e^{-\|x\|^2/2
- \eps \varphi} dx \  \ln \left(\int_{\mathbb R^n} e^{-\|x\|^2/2 - \eps \varphi}dx\right)
\\
&&=\frac{n}{2}({2\pi})^{n/2}\ln({2\pi})
+\eps \left(   -\left(\frac{n}{2}\ln({2\pi})+1\right)\int_{\mathbb R^n} e^{-\|x\|^2/2} \varphi dx
\right)\\
&&+\eps^2\left(   \left(\frac{n}{2}\ln ({2\pi}) +1\right)\int_{\mathbb R^n} e^{-\|x\|^2/2}
\frac{\varphi^2}{2} dx+ \frac{1}{2({2\pi})^{n/2}}\left(\int_{\mathbb R^n} e^{-\|x\|^2/2}
\varphi  dx\right)^2 \right)
 + O(\eps^3)
 \end{eqnarray*}
Altogether,  the right hand side equals
\begin{eqnarray*}
&&2 \left\{ \int_{\mathbb R^n} e^{-\|x\|^2/2-\eps \varphi}
(-\|x\|^2/2-\eps\varphi)dx - \left(\int_{\mathbb R^n}
e^{-\|x\|^2/2-\eps\varphi}dx\right) \ln \left(\int_{\mathbb R^n}
e^{-\|x\|^2/2-\eps \varphi}dx \right)
\right\} \\
&&+ \left( \int_{\mathbb R^n} e^{-\|x\|^2/2-\eps \varphi}dx \right)
n\ln(2 \pi e)
\\&&=
-\int_{\mathbb R^n} e^{-\|x\|^2/2}{\|x\|^2}dx - n(2\pi)^{n/2}\ln(2\pi)
 +n \ln(2\pi e) \int_{\mathbb R^n} e^{-\|x\|^2/2}
dx\\
&&+ \eps\left\{ 2\left(\int_{\mathbb R^n}
 \varphi e^{-\|x\|^2/2}
(\frac{\|x\|^2}{2}-1)dx\right) - 2 \left(   -(\frac{n}{2}\ln({2\pi})+1)\int_{\mathbb R^n}
e^{-\|x\|^2/2} \varphi dx\right)\right. \\
&&\left.- n \ln (2\pi e)\int e^{-\|x\|^2/2}
\varphi\right\} \\
&&+\eps^2\left\{ \int_{{\mathbb R}^n}
\varphi^2
e^{-\|x\|^2/2} (1+\frac{n}{2}-\frac{1}{2}\|x\|^2)dx -
\frac{1}{({2\pi})^{n/2}}\left(\int_{{\mathbb R}^n} e^{-\|x\|^2/2} \varphi dx\right)^2\right\}
+O(\eps^3).
\end{eqnarray*}
Since
$$
\int_{{\mathbb R}^n} e^{-\|x\|^2/2}  dx
=({2\pi})^{n/2}
\hskip 10mm \mbox{and} \hskip 10mm
\int_{{\mathbb R}^n}\|x\|^{2} e^{-\|x\|^2/2}  dx
=n({2\pi})^{n/2},
$$
we get for the zeroth order term,
\begin{eqnarray*}
&&-\int_{\mathbb R^n} e^{-\|x\|^2/2}{\|x\|^2}dx - n(2\pi)^{n/2}\ln(2\pi)
 +n \ln(2\pi e) \int_{\mathbb R^n} e^{-\|x\|^2/2}
dx  \\
&&=-n(2\pi)^{n/2}
-n(2\pi)^{n/2}\ln(2\pi)
 +n \ln(2\pi e) (2\pi)^{n/2}=0.
\end{eqnarray*}
Therefore,  we get for the right hand side
\begin{eqnarray*}
&& \eps \int_{{\mathbb R}^n}\varphi
e^{-\|x\|^2/2}  {(\|x\|^2-n)} dx \\
&&+\eps^2 \left\{ \int_{{\mathbb R}^n}\varphi^2
e^{-\|x\|^2/2}
\left(\frac{n+2-\|x\|^2}{2}\right)dx - \frac{1}{({2\pi})^{n/2}}\left(\int_{{\mathbb R}^n} e^{-\|x\|^2/2}
\varphi dx\right)^2\right\}  +O(\eps^3).
\end{eqnarray*}
The coefficients of $\eps$ on the left and right hand side  are the same and we disacrd them. We   divide both sides by $\eps^{2}$ and take the limit
for $\eps\to0$. Then
\begin{eqnarray*}
&&\hskip -15mm \int_{{\mathbb R}^n} e^{-\|x\|^2/2} \left[ - \varphi \triangle \varphi -
\frac{\|\Hess \varphi\|_2^2}{2} \right]dx \\
&&\le   \int_{{\mathbb R}^n}\varphi^2
e^{-\|x\|^2/2} (\frac{n+2-\|x\|^2}{2}) dx-
\frac{1}{({2\pi})^{n/2}}\left(\int_{{\mathbb R}^n} e^{-\|x\|^2/2} \varphi
dx\right)^2.
\end{eqnarray*}
If we want the right hand side to include the variance, we may write
the inequality  as follows
\begin{eqnarray}\label{sohalt}
&&\hskip -15mm \int_{{\mathbb R}^n} e^{-\|x\|^2/2} \left[ - \varphi \triangle \varphi -
\frac{\|\Hess \varphi\|_2^2}{2} \right]dx \nonumber \\
&& \le
 \int_{{\mathbb R}^n}\varphi^2
e^{-\|x\|^2/2} \left(\frac{n-|x|^2}{2}\right)dx + ({2\pi})^{n/2}
\left[\int_{{\mathbb R}^n} \varphi^2 d\gamma_n
- \left(\int_{{\mathbb R}^n} \varphi d\gamma_n\right)^2\right]
\end{eqnarray}
Now we integrate on the right by parts twice, noting that
$(n-\|x\|^2)e^{-\|x\|^2/2} =
\triangle(e^{-\|x\|^2/2})$, so that the first term on the right hand
side is
\begin{eqnarray*} \int_{\mathbb R^n} e^{-\|x\|^2/2} \varphi^2 (\frac{n - \|x\|^2}{2}) dx
&=&-\frac{1}{2}\int_{\mathbb R^n} e^{-\|x\|^2/2} \triangle(\varphi^2) dx  \\
&=& -\int_{\mathbb R^n}
e^{-\|x\|^2/2} (\varphi\triangle \varphi + \|\grad \varphi\|^2)dx.
\end{eqnarray*}
We put that in (\ref{sohalt}) and   one gets
\[\int_{{\mathbb R}^n} e^{-|x|^2/2} \left[ \|\grad \varphi\|^2 -
\frac{\|\Hess \varphi\|_{HS}^2}{2}
\right] dx
\le  {({2\pi})^{n/2}}\left[ \int_{{\mathbb R}^n} \varphi^2 d\gamma_n - \left(\int_{{\mathbb R}^n}
\varphi d\gamma_n\right)^2\right], \]
which we  can rewrite  as
\[\int_{{\mathbb R}^n}    \|\grad \varphi\|^2 - \frac{\|\Hess \varphi\|_2^2}{2}
 d\gamma_n
\le  \int_{{\mathbb R}^n} \varphi^2 d\gamma_n - \left(\int_{{\mathbb R}^n}
\varphi d\gamma_n\right)^2.\]
Thus we have shown that the inequality holds for all twice continuously
differentiable functions $\varphi$ with bounded support.
One may extend it to
all twice continuously differentiable functions $\varphi\in L^{2}(\mathbb R^{n},\gamma_{n})$
with $\| \Hess \varphi\|_{HS} \in L^{2}(\mathbb R^{n},\gamma_{n})$ by a standard approximation argument, as follows.

Let $\chi_{k}$ be a twice continuously differentiable function
bounded between zero and one such that $\chi_{n}(x)=1$ for all
$\|x\|\leq k$ and $\chi_{n}(x)=0$ for all $\|x\|>k+1$. Then, for all
$k\in\mathbb N$
$$
\int_{\mathbb R^{n}}  \left[   \|\grad(\varphi\circ\chi_{k}) \|^2 - \frac{\| \Hess \varphi\|_{HS}^2}{2}
\right] d\gamma_{n}
\le  \int_{\mathbb R} (\varphi\circ\chi_{k})^2 d\gamma - \left(\int_{\mathbb R^{n}}(\varphi\circ\chi_{k})
d\gamma_{n}\right)^2,
$$
or, equivalently,
$$
\int_{\mathbb R^{n}} \|\grad(\varphi\circ\chi_{k})\|^2
 d\gamma +  \left(\int(\varphi\circ\chi_{k})
d\gamma_{n}\right)^2
\le  \int_{\mathbb R^{n}} (\varphi\circ\chi_{k})^2 d\gamma
+\int_{\mathbb R^{n}} \frac{\| \Hess \varphi\|_{HS}^2}{2}
d\gamma_{n}.
$$
It follows that
\begin{eqnarray*}
&&\hskip -15mm \liminf_{k\to\infty}\int_{\mathbb R^{n}} \|\grad(\varphi\circ\chi_{k})\|^2
 d\gamma_{n}
 +  \liminf_{k\to\infty}\left(\int(\varphi\circ\chi_{k})
d\gamma_{n}\right)^2   \\
&&\hskip 15mm \le \limsup_{k\to\infty} \int_{\mathbb R^{n}} (\varphi\circ\chi_{k})^2 d\gamma_{n}
+\limsup_{k\to\infty}\int_{\mathbb R^{n}} \frac{\| \Hess \varphi\|_{HS}^2}{2}
d\gamma_{n}.
\end{eqnarray*}
By Fatou's lemma and the dominated convergence theorem
\begin{eqnarray*}
&&\hskip -15mm\int_{\mathbb R^{n}} \liminf_{k\to\infty} \|\grad(\varphi\circ\chi_{k})\|^2
 d\gamma_{n}
 + \left(\int_{\mathbb R^{n}}\lim_{k\to\infty}(\varphi\circ\chi_{k})
d\gamma_{n}\right)^2   \\
&& \hskip 15mm \le \int_{\mathbb R} \lim_{k\to\infty}
(\varphi\circ\chi_{k})^2 d\gamma +\int_{\mathbb R}
\limsup_{k\to\infty} \frac{\|
\Hess \varphi\|_{HS}^2}{2} d\gamma,
\end{eqnarray*}
which gives
\begin{eqnarray*}
\int_{\mathbb R^{n}} \|\grad \varphi \|^2
 d\gamma
 + \left(\int_{\mathbb R^{n}}\varphi
d\gamma\right)^2
\le \int_{\mathbb R^{n}} \varphi^2 d\gamma_{n}
+\int_{\mathbb R^{n}} \frac{\| \Hess \varphi\|_{HS}^2}{2}
d\gamma_{n}.
\end{eqnarray*}
\end{proof}

\medskip An alternative, direct proof of Corollary \ref{CorPoin} may be given by
expanding $\vphi \in C^2(\R^n) \cap L^2(\R^n, \gamma_n)$ into
Hermite polynomials. That is, denote by $h_0(x),h_1(x),\ldots$ the
Hermite polynomials in one variable, normalized so that $\| h_i
\|_{L^2(\gamma_1)} = 1$ for all $i$. We may decompose
$$ \vphi = \sum_{i_1,\ldots,i_n = 0}^{\infty} a_{i_1,\ldots,i_n} \prod_{j=1}^n h_{i_j}(x_i) $$
where the convergence is in $L^2(\R^n, \gamma_n)$. Then the
right-hand side of (\ref{CorPoin1_}) equals
\begin{equation}
\label{eq_1054} \sum_{i_1,\ldots,i_n = 0 \atop{(i_1,\ldots, i_n)
\neq (0,\ldots,0)}}^{\infty} a_{i_1,\ldots,i_n}^2. \end{equation}
Using the identity $h_i^{\prime} =  \sqrt{i} \cdot h_{i-1}$, we see
that the left-hand side of (\ref{CorPoin1_}) is
\begin{equation}
 \int_{\mathbb R^{n}}  \left[ \left\|\grad \varphi\right\|^2_{} -
\frac{\|\Hess \varphi\|_{HS}^2}{2} \right] d\gamma_n =
\sum_{i_1,\ldots,i_n = 0}^{\infty} \left[ \frac{3}{2} \sum_{j=1}^n
i_j - \frac{1}{2} \left( \sum_{j=1}^n i_j \right)^2
 \right] a_{i_1,\ldots,i_n}^2. \label{eq_1110}
\end{equation}
We will use the simple fact that $x(3 - x) / 2 \leq 1$ for any
integer $x \geq 1$, for $x = \sum_{j=1}^n i_j$. Glancing at
(\ref{eq_1054}) with (\ref{eq_1110}) and using the aforementioned
simple fact, we deduce Corollary \ref{CorPoin}. We also see that
equality in (\ref{CorPoin1_}) holds if and only if $\vphi$ is a
polynomial of degree at most $2$, because $x (3-x) / 2 = 1$ only for
$x=1,2$.

The proof of Theorem \ref{gen-inv-poinc} is along the exact same lines,
using the all the derivatives are diagonalized by the Hermite polynomials with respect to the Gaussian measure, only that the inequality $x(3 - x) / 2 \leq 1$, which can be rewritten as $(x-1)(x-2)\ge 0$ for integers $x\ge 1$, is replaced by the more general inequality  $(x-1)(x-2)\cdots (x-j)\ge 0$ for integers $x\ge 1$, with equality if and only if $x\in \{1, \ldots, j\}$.

We remark that it is desirable to find an alternative,
direct proof of Theorem \ref{thm1}, which does not rely on the
affine isoperimetric inequality.

\medskip

{\bf Acknowledgement} We would like to thank Dario Cordero-Erausquin and Matthieu Fradelizi for helpful conversations.
Part of the  work was done during the authors stay at the Fields Institute, Toronto,  in the fall of 2010.  The paper was finished while
the last two named authors stayed at the  Institute for Mathematics and its Applications, University of Minnesota, in the fall of 2011. Thanks  go to both institutions for their hospitality.

\medskip

\vskip 5mm

\vskip 2mm
\noindent
Shiri Artstein-Avidan\\
{\small School of Mathematical Sciences}\\
{\small Tel-Aviv University }\\
{\small Tel-Aviv 69978,  Israel}\\
{\small \tt shiri@post.tau.ac.il} \\ \\
\noindent
\and
Bo'az Klartag\\
{\small School of Mathematical Sciences}\\
{\small Tel-Aviv University }\\
{\small Tel-Aviv 69978,  Israel}\\
{\small \tt klartagb@tau.ac.il} \\ \\
\noindent
\and
Carsten Sch\"utt\\
{\small Mathematisches Institut}\\
{\small Universit\"at Kiel}\\
{\small 24105 Kiel, Germany}\\
{\small \tt schuett@math.uni-kiel.de }\\ \\
\noindent
\and
Elisabeth Werner\\
{\small Department of Mathematics \ \ \ \ \ \ \ \ \ \ \ \ \ \ \ \ \ \ \ Universit\'{e} de Lille 1}\\
{\small Case Western Reserve University \ \ \ \ \ \ \ \ \ \ \ \ \ UFR de Math\'{e}matique }\\
{\small Cleveland, Ohio 44106, U. S. A. \ \ \ \ \ \ \ \ \ \ \ \ \ \ \ 59655 Villeneuve d'Ascq, France}\\
{\small \tt elisabeth.werner@case.edu}\\ \\

\end{document}